\definecolor{myurlcolor}{rgb}{0,0,0.4}
\definecolor{mycitecolor}{rgb}{0,0.5,0}
\definecolor{myrefcolor}{rgb}{0.5,0,0}
\theoremstyle{plain}
\newtheorem{theorem}{Theorem}[section]
\newtheorem{lemma}[theorem]{Lemma}
\newtheorem{corollary}[theorem]{Corollary}
\newtheorem{proposition}[theorem]{Proposition}
\theoremstyle{remark}
\newtheorem{remark}[theorem]{Remark}
\theoremstyle{definition}
\newtheorem{definition}[theorem]{Definition}
\newtheorem{example}[theorem]{Example}
\newtheorem{problem}[theorem]{Problem}
\def\im{{\rm im}}
\def\C{\mathbb{C}}
\def\R{\mathbb{R}}
\def\id{\mathrm{id}}
\def\N{\mathbb{N}}
\let\originalleft\left 
\let\originalright\right 
\renewcommand{\left}{\mathopen{}\mathclose\bgroup\originalleft} 
\renewcommand{\right}{\aftergroup\egroup\originalright} 
\title[Spectrahedral Containment and Operator Systems]{Spectrahedral Containment and Operator Systems with finite-dimensional Realization}
\author{Tobias Fritz}
\address{Tobias Fritz, Max Planck Institute for Mathematics in the Sciences, Leipzig, Germany }
\email{fritz@mis.mpg.de}
\author{Tim Netzer}
\address{Tim Netzer, Universit\"at Innsbruck, Austria}
\email{tim.netzer@uibk.ac.at}
\author{Andreas Thom}
\address{Andreas Thom, TU Dresden, Germany}
\email{andreas.thom@tu-dresden.de}
\begin{document}

\begin{abstract} 
Containment problems for polytopes and spectrahedra appear in various applications, such as linear and semidefinite programming, combinatorics, convexity and stability analysis of differential equations. This paper explores the theoretical background of a method proposed by Ben-Tal and Nemirovksi \cite{bental}. Their method provides a strengthening of the containment problem, that is algorithmically well tractable.  
 To analyze this method, we study abstract operator systems, and investigate when they have  a finite-dimensional concrete realization. 
Our results give some profound insight into their approach.  They  imply that when testing the inclusion of a fixed polyhedral cone in an arbitrary spectrahedron, the strengthening  is tight if and only if the polyhedral cone is a simplex. This is true independent of the representation of the polytope. We also deduce error bounds in the other cases, simplifying and extending recent results by various authors.

\end{abstract}

\maketitle

\section{Introduction and Preliminaries}
Spectrahedra are the feasible sets of semidefinite programming, and have attracted a lot of attention in recent years, both from an applied and pure perspective (see \cite{blek} for an overview). Studying their geometry is a rewarding task for pure mathematicians, but any insight also directly influences the numerous applications, as are optimization, convexity, control theory and others. One of these applications, Lyapunov stability analysis of differential equations, was studied by Ben-Tal and Nemirovski \cite{bental}. The problem reduces to checking containment of a box in a spectrahedron, which is a hard problem in general. They came up with a strengthening of this problem, which admits an efficient algorithmic  approach. 
 It has been discovered  \cite{hecp} that the method can only be fully understood by adding matricial levels to the spectrahedra, i.e.~by examining their free versions. This idea has been further pursued in \cite{dav2, hedi, ke1,ke2,ke3}.

The results in this paper can be looked at from two sides. On the one hand, we analyze the power of Ben-Tal and Nemirovski's idea, providing a complete description of the cases in which their method provides tight results. In the case of non-tightness  we provide error bounds, simplifying and extending 
upon recent results of several authors. Since tightness is a rather rare phenomenon, such error bounds are of particular interest for applications.  Our results show how they directly emerge from geometric properties of the problem, and that they can be computed explicitly.  On the other hand, we examine abstract operator systems, and ask when these admit a finite-dimensional concrete realization, i.e.~a realization by matrices. This is an interesting and hard problem, which often involves determining the boundary representations of the system (see for example \cite{arg, arv, dav}).  The connection between the two perspectives becomes clear by observing that free spectrahedra are essentially the same as operator systems with finite-dimensional realizations. We believe that only the fusion of these two views allows to fully understand the setup of Ben-Tal and Nemirovski's seminal approach, and further exploit the capabilities of spectrahedra in applications.

Our paper is structured as follows. We start with an abstract operator system and characterize when it admits a finite-dimensional realization (Theorem \ref{finrea}). We then investigate operator systems constructed from convex cones at scalar level, namely the smallest and the largest operator system of a cone. We show that the largest system admits a finite-dimensional realization if and only if the cone is polyhedral (Theorem \ref{maxreal}), and the  smallest system of a polyhedral cone is finite-dimensional realizable if and only if the cone is a simplex (Theorem \ref{simplex}). The smallest system of a non-polyhedral cone can also be finite-dimensional realizable (Example~\ref{circle}), but this seems to happen very rarely. Now translated into the initial problem of testing inclusion of spectrahedra, Theorem~\ref{simplex} says the following. When checking inclusion of a fixed polytope in an arbitrary spectrahedron, the strengthening first introduced by Ben-Tal and Nemirovski \cite{bental} is tight for any spectrahedron if and only if the polytope is a simplex (Corollary \ref{incmain}). This is true independently of the representation of the polytope. Further, our setup allows to give an easy proof of the existence of scaling factors for inclusion from  \cite{dav, ke3}, and prove novel bounds for general spectrahedra (see Section \ref{sec_inc}). 

Let us introduce the basic concepts. Throughout, $\mathcal{V}$ denotes a $\C$-vector space with involution $*$, and $\mathcal{V}_h$ is the $\R$-subspace of Hermitian elements. For any $s\geq 1$, the vector space $\mathbb M_s(\mathcal{V}) = \mathcal{V}\otimes_{\C}\mathbb M_s(\C)$ of $s\times s$-matrices with entries from $V$ comes equipped with the canonical involution defined by $\left(v_{ij}\right)_{i,j}^*:=\left(v_{ji}^*\right)_{i,j}$.

\begin{definition}[{e.g.~\cite[Chapter~13]{pau}}]
An {\it abstract operator system} $\mathcal{C}$ on $\mathcal{V}$ consists of a closed and salient convex cone $\mathcal{C}_s\subseteq \mathbb M_s(\mathcal{V})_h$ for each $s\geq 1$, such that
\begin{itemize}
\item $A\in \mathcal{C}_s, V\in\mathbb M_{s,t}(\mathbb C) \Rightarrow V^*AV\in \mathcal{C}_t$,
\item there is $u\in \mathcal{C}_1\subseteq \mathcal{V}_h$ such that $u\otimes I_s$ is an order unit (or equivalently interior point) of $\mathcal{C}_s$ for all $s\geq 1$.
\end{itemize}
\end{definition}

\renewcommand{\labelenumi}{(\alph{enumi})}
\renewcommand{\theenumi}{(\alph{enumi})}

\begin{remark}
\begin{enumerate}[leftmargin=*]
\item The topology in which each $\mathcal{C}_s$ is required to be closed is understood to be the finest locally convex topology on $\mathcal V$.
\item We usually consider the order unit $u\in \mathcal{C}_1$ to be part of the structure of an operator system (as opposed to a mere property), which means that maps of operator systems are typically required to preserve it.
\item $(u\otimes I_s)\in \mathcal{C}_s$ is an order unit for all $s$ if and only if this holds for $s=1$. To show this, we start with an arbitrary element $A\in \mathbb{M}_s(\mathcal V)_h$ and decompose it as $A=\sum_{i=1}^n v^{(i)} \otimes M_i$ with $v^{(i)}\in \mathcal V_h$ and $M_i\in\mathbb{M}_s(\C)_h$. Assuming that $u\in \mathcal{C}_1$ is an order unit, choose $\lambda\in\R$ such that $\pm v^{(i)} + \lambda u \in \mathcal{C}_1$ for all $i$, and write $M_i = P_i - Q_i$ as a difference of two positive semidefinite matrices. Then
\[
	\sum_i (v^{(i)}+\lambda u)\otimes P_i + (-v^{(i)}+\lambda u)\otimes Q_i= \sum_i v^{(i)} \otimes M_i + \lambda u\otimes \sum_i (P_i+Q_i) 
\]
is also in $\mathcal{C}_s$. Thus if $\gamma\geq 0$ is large enough to ensure $\gamma I_s -\sum_i (P_i+Q_i)\geqslant 0$, then $A+\gamma\lambda \left(u\otimes I_s\right)\in \mathcal{C}_s$. So $u\otimes I_s$ is indeed an order unit for $\mathcal{C}_s$.
\end{enumerate}
\end{remark}

By the Choi--Effros Theorem (\cite{choi}, see  also \cite[Chapter~13]{pau}), for any abstract operator system $\mathcal{C}$  there is a Hilbert space $\mathcal H$ and a $*$-linear mapping $\varphi\colon \mathcal V\rightarrow \mathbb B(\mathcal H)$ with $\varphi(u)={\rm id}_{\mathcal H}$, such that for all $s\geq 1$ and $A\in \mathcal{C}_s$,
$$
A\in \mathcal{C}_s \:\Leftrightarrow\: (\varphi\otimes{\rm id})(A)\geqslant 0.
$$
On the right-hand side, we use the canonical identification
$$
\mathbb M_s(\mathbb B(\mathcal H))=\mathbb B(\mathcal H)\otimes_\C \mathbb M_s(\C) = \mathbb B(\mathcal H^s)
$$
to define positivity of the operator. Such a mapping $\varphi$ is called a {\it concrete realization} or just {\it realization} of the operator system $\mathcal{C}$. A realization $\varphi$ is necessarily injective, since $\mathcal{C}_1$ does not contain a nontrivial subspace.

\begin{definition}
For $r\in\N$, an abstract operator system $\mathcal{C}$ is {\it $r$-dimensional realizable} if there is a realization with $\dim{\mathcal{H}}=r$. It is {\it finite-dimensional realizable} if it is $r$-dimensional realizable for some $r\in\N$.
\end{definition}

Now assume that $\mathcal V$ is finite-dimensional. After a suitable choice of basis, we can assume $\mathcal V=\mathbb C^d$ with the canonical involution, and thus  $\mathcal V_h=\R^d$. Then
$$
\mathbb M_s(\mathcal V)=\mathcal V\otimes_\C \mathbb M_s(\C)=\mathbb M_s(\C)^d,\quad \mathbb M_s(\mathcal V)_h={\rm Her}_s(\C)^d,
$$
and a realization of $\mathcal{C}$ just consists of self-adjoint operators $T_1,\ldots, T_d\in\mathbb B(\mathcal H)_h$ with $u_1T_1+\cdots+u_dT_d={\rm id}_{\mathcal H}$ and 
$$
(A_1,\ldots, A_d)\in \mathcal{C}_s \:\Leftrightarrow\: T_1\otimes A_1+\cdots + T_d\otimes A_d\geqslant 0.
$$
Finite-dimensional realizability then means that the $T_i$ can be taken to be matrices.

\begin{definition}
A {\it (classical) spectrahedral cone} is a set of the form
$$
\left\{ a\in\R^d \bigm|  a_1M_1+\cdots+a_dM_d\geqslant 0\right\},
$$
where $M_1,\ldots, M_d\in {\rm Her}_r(\C)$ are Hermitian matrices, and $\geqslant 0$ again denotes positive semidefiniteness. For any $s\geq 1$, we define
$$
\mathcal S_s(M_1,\ldots, M_d) := \left\{ (A_1,\ldots, A_d)\in{\rm Her}_s(\C)^d \bigm| M_1\otimes A_1+\cdots +M_d\otimes A_d\geqslant 0\right\}.
$$
The family of cones $\mathcal S(M_1,\ldots,M_d)=\left(\mathcal S_s(M_1,\ldots, M_d)\right)_{s\geq 1}$ is called the {\it free spectrahedron} defined by $M_1,\ldots,M_d$.
\end{definition}

\begin{remark}
\label{FSvsOS}
In order for a free spectrahedron to be an operator system, the positive cones must be salient and have an order unit. The first is equivalent to the $M_i$ being linearly independent, and the latter happens in particular if there is $u\in\R^d$ with $\sum_i u_i M_i = I_r$, in which case we take this $u$ to be the order unit.
\end{remark}

Classical spectrahedra are the feasible sets of semidefinite programming, which allows for efficient numerical algorithms (see for example \cite{sedumi, wo}). They share many properties of polytopes, which form a strict subclass. It is generally hard to decide whether a cone is spectrahedral, and a lot of recent research deals with questions arising in this area (see \cite{blek} for an overview). For example, the {\it inclusion problem} in its basic form asks whether $$\mathcal S_1(M_1,\ldots, M_d)\subseteq\mathcal S_1(N_1,\ldots, N_d)$$ holds for given families of matrices $M_i$ and $N_j$. In Section \ref{sec_inc} we will explain how this problem relates to our results.  For the moment, just note that a free spectrahedron with the properties of Remark~\ref{FSvsOS} is (up to isomorphism) the same as a finite-dimensional realizable operator system.

\section{A criterion for finite-dimensional realizations}

In this section, we prove a criterion for operator systems to admit a finite-dimensional realization, namely Theorem \ref{finrea} below. Throughout, let $\mathcal{C}=\left(\mathcal{C}_s\right)_{s\geq 1}$ be an operator system on $\mathcal{V}=\C^d$ with order unit $u=(u_1,\ldots,u_d)\in\mathcal{C}_1$. Let $\mathcal{C}_s^{\vee}$ denote the dual cone of $\mathcal{C}_s$, i.e.~the set of all $*$-linear functionals on $\mathbb M_s(\C)^d$ that are nonnegative on $\mathcal{C}_s$. We begin by reviewing the separation method of Effros and Winkler. 

\begin{lemma}[\cite{effwi}]
Let $\varphi\in\mathcal{C}_r^\vee$ be such that $\varphi(u\otimes vv^*) > 0$ for all $0\neq v\in\C^r$. Then there are $M_1,\ldots,M_d\in {\rm Her}_r(\C)$ with $\sum_i u_i M_i = I_r$, which generate a free spectrahedron containing $\mathcal{C}$, and such that:
\begin{enumerate}
	\item\label{sepboundary} If $A\in{\rm Her}_s(\C)^d$ is such that $\varphi(V^* A V) = 0$ for some $V\neq 0$, then $A$ is in the boundary of this free spectrahedron.
	\item\label{sepoutside} If $A\in{\rm Her}_r(\C)^d$ is such that $\varphi(A) < 0$, then $A$ is not in this free spectrahedron.
\end{enumerate}
\label{effwisep}
\end{lemma}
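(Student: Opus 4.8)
The goal is to produce, from a single functional $\varphi \in \mathcal{C}_r^\vee$ that is strictly positive on the rank-one cone elements $u\otimes vv^*$, a tuple $M_1,\dots,M_d$ whose free spectrahedron contains $\mathcal{C}$ and detects membership in the boundary and the outside via $\varphi$. The plan is to set up a pairing: any $*$-linear functional on $\mathbb{M}_r(\C)^d$ can be identified with a $d$-tuple $(M_1,\dots,M_d)$ of Hermitian $r\times r$ matrices via $\varphi(A_1,\dots,A_d) = \sum_i \mathrm{tr}(M_i A_i)$, or more naturally $\varphi(A) = \mathrm{tr}\big(\sum_i M_i \otimes (\text{something})\big)$; the precise dictionary is the Effros--Winkler correspondence between matrix-valued functionals and matrix tuples. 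I would first fix that correspondence carefully, observing that the condition $\varphi(u\otimes vv^*)>0$ for all $v\neq 0$ translates exactly into $\sum_i u_i M_i$ being positive definite, so after rescaling (replacing $M_i$ by $(\sum_j u_j M_j)^{-1/2} M_i (\sum_j u_j M_j)^{-1/2}$) we may assume $\sum_i u_i M_i = I_r$, matching the normalization in the statement.

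Next I would verify containment $\mathcal{C}\subseteq \mathcal{S}(M_1,\dots,M_d)$. Fix $s$ and $A = (A_1,\dots,A_d)\in\mathcal{C}_s$; I want $\sum_i M_i\otimes A_i \geqslant 0$ in $\mathrm{Her}_{rs}(\C)$, i.e.\ $\langle \xi, (\sum_i M_i\otimes A_i)\xi\rangle \geq 0$ for every $\xi\in\C^r\otimes\C^s$. The key computation is that this inner product equals $\varphi$ evaluated at an element of the form $V^*AV$ for a suitable $V\in\mathbb{M}_{s,?}(\C)$ built from $\xi$ (writing $\xi$ as a sum of simple tensors and packaging the $\C^s$-parts into the columns of $V$), so that positivity of $\varphi$ on $\mathcal{C}$ combined with the compression axiom $V^*AV\in\mathcal{C}$ gives the claim. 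The same computation, read the other way, gives both stated refinements: in (a), if $\varphi(V^*AV)=0$ for some $V\neq 0$ then the associated vector $\xi$ is a nonzero vector in the kernel of $\sum_i M_i\otimes A_i$, so this matrix is positive semidefinite but not definite, i.e.\ $A$ lies on the boundary of the free spectrahedron; in (b), if $A\in\mathrm{Her}_r(\C)^d$ has $\varphi(A)<0$, then taking $\xi$ to be the vector implementing $\varphi(A) = \langle\xi,(\sum_i M_i\otimes A_i)\xi\rangle$ (here with the trivial scalar level, $V$ essentially the identity reshaped) shows $\sum_i M_i\otimes A_i$ has a negative eigenvalue, so $A$ is outside.

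The main obstacle I anticipate is purely bookkeeping but genuinely delicate: getting the index gymnastics of the Effros--Winkler pairing exactly right, so that the identity $\langle\xi,(\sum_i M_i\otimes A_i)\xi\rangle = \varphi(V^*AV)$ holds with the correct placement of transposes, conjugates, and the order of tensor factors, and so that $\xi\mapsto V$ is a genuine bijection between nonzero vectors in $\C^r\otimes\C^s$ and nonzero matrices $V\in\mathbb{M}_{r,s}(\C)$ (or $\mathbb{M}_{s,r}$, depending on conventions). Once that dictionary is pinned down, positive-semidefiniteness of $\sum_i M_i\otimes A_i$ is literally the statement that $\varphi$ is nonnegative on the compressions $V^*AV\in\mathcal{C}_s$, and everything else is immediate. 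I would therefore spend the bulk of the write-up on the correspondence and the quadratic-form identity, then dispatch (a) and (b) in a sentence each. Since this is attributed to Effros--Winkler \cite{effwi}, I expect the argument to follow their separation construction essentially verbatim, with (a) and (b) being the refinements one reads off the construction.
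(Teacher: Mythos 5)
Your proposal matches the paper's argument step for step: Riesz-representing $\varphi$ by a tuple of Hermitian matrices $N_i$, using strict positivity on the $u\otimes vv^*$ to get $\hat N:=\sum_i u_iN_i>0$ and renormalizing $M_i:=\hat N^{-1/2}N_i\hat N^{-1/2}$, and then establishing the quadratic-form identity $\langle x,(\sum_i N_i\otimes A_i)x\rangle=\varphi(V^*AV)$ under the bijection between $x\in\C^r\otimes\C^s$ and $V\in\mathbb M_{s,r}(\C)$, from which containment and both refinements follow. The only small correction to your phrasing of part~(a) is that $\varphi(V^*AV)=0$ by itself only shows the pencil $\sum_i M_i\otimes A_i$ is not positive definite (not that $x$ lies in its kernel, which would require already knowing the pencil is positive semidefinite), but that weaker conclusion is exactly what the paper's proof records and is all that the lemma asserts.
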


\begin{proof}
Let the $N_1,\ldots,N_d\in {\rm Her}_r(\mathbb C)$ be such that $\varphi(B_1,\ldots, B_d)=\sum_i {\rm tr}\left(\overline{N}_iB_i\right)$ for all $B\in\mathbb M_r(\mathbb C)^d$. The positivity assumption guarantees that $\sum_i u_i N_i \geq 0$. Even better, the assumption $\varphi(u\otimes vv^*) > 0$ for all $0\neq v\in\C^r$ implies that $\hat{N} := \sum_i u_i N_i > 0$, and thus we can put $M_i := \hat{N}^{-1/2} N_i \hat{N}^{-1/2}$ and have $\sum_i u_i M_i = I_r$ by construction.
	
To show that the resulting free spectrahedron contains $\mathcal{C}$, consider $A\in\mathcal{C}_s$. Then for $x=\sum_{j=1}^r e_j\otimes v_j$ with $v_1,\ldots, v_r\in\mathbb C^s$ and $e_1,\ldots, e_r$ the standard basis of $\mathbb C^r$, we have
\begin{equation}
\label{sepcalc}
\left\langle x,\left(\sum_i N_i\otimes A_i\right)x \right\rangle= \sum_i {\rm tr}\left(\overline{N}_i V^*A_iV\right)=\varphi(V^*AV)\geq 0,
\end{equation}
where $V$ is the matrix with $v_1,\ldots, v_r$ as its columns. Therefore $\sum_i N_i\otimes A_i\geqslant 0$, which also implies $\sum_i M_i\otimes A_i\geqslant 0$, as was to be shown.
	
If $\varphi(V^*AV)=0$ for some $V\neq 0$, then $A$ lies in the boundary of the free spectrahedron, since~\eqref{sepcalc} shows that $\sum_i N_i\otimes A_i$ and hence also $\sum_i M_i \otimes A_i$ is not positive definite, resulting in~\ref{sepboundary}. Part~\ref{sepoutside} works similarly.
\end{proof}

\begin{definition}
The  {\it essential boundary} of $\mathcal C$ is:
$$
\partial^{\rm ess} \mathcal{C}_s:=\left\{ A\in \mathcal{C}_s\mid \exists \varphi\in \mathcal{C}_s^{\vee}, \: \varphi(u\otimes vv^*)>0 \mbox{ for all }  v\in\mathbb C^s\setminus\{0\},\: \varphi(A)=0 \right\}.
$$
So an element is in the essential boundary if and only if its minimal exposed face does not contain an element $u\otimes vv^*$ with $v\neq 0$.
\end{definition}

Example~\ref{essbound} showcases what the essential boundary of a particular operator system may look like.

\begin{theorem}\label{finrea}
A finite-dimensional operator system $\mathcal{C}$ is $r$-dimensional realizable if and only if it has the following property: for any $n, s_1,\ldots, s_n\in\N$ and $A^{(i)}\in\partial \mathcal{C}_{s_i}$, there exist $0\neq V_i\in\mathbb M_{s_i,r}(\mathbb C)$ with 
\[
\sum_{i=1}^n V_i^* A^{(i)} V_i \:\in\: \partial^{\rm ess} \mathcal{C}_r.
\]
\end{theorem}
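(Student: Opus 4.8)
The plan is to prove both directions by exploiting the characterization of boundaries via exposing functionals and Lemma~\ref{effwisep}.

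\medskip

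\noindent\emph{Necessity.} Suppose $\mathcal{C}$ is $r$-dimensional realizable, say by $T_1,\ldots,T_d\in\mathbb B(\mathcal{H})_h$ with $\dim\mathcal{H}=r$, so that $(A_1,\ldots,A_d)\in\mathcal{C}_s$ iff $\sum_i T_i\otimes A_i\geqslant 0$. First I would observe that for $A^{(i)}\in\partial\mathcal{C}_{s_i}$ the operator $\sum_j T_j\otimes A^{(i)}_j$ is positive semidefinite but not definite, so it has a nonzero kernel vector in $\mathcal{H}\otimes\C^{s_i}=\C^r\otimes\C^{s_i}$; writing such a vector as $\sum_{k=1}^r e_k\otimes w_k$ gives a nonzero matrix $W_i\in\mathbb M_{s_i,r}(\C)$ (columns $w_k$) with $e_k$ in the kernel ``direction''. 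The key computation, exactly as in~\eqref{sepcalc}, is that $\langle x,(\sum_j T_j\otimes A^{(i)}_j)x\rangle$ for $x=\sum_k e_k\otimes v_k$ equals $\mathrm{tr}$ of $\sum_j T_j$ against $V^*A^{(i)}V$-type expressions; more precisely I want the identity that the compression $\sum_j T_j\otimes (W_i^* A^{(i)} W_i)_{?}$ — let me instead phrase it cleanly: for any $V\in\mathbb M_{r,t}(\C)$ one has $\sum_j T_j\otimes (W_i^* A^{(i)} W_i)$ being a compression of $\sum_j T_j\otimes A^{(i)}_j$ only after identifying indices appropriately. The honest statement I would use is: $W_i^*A^{(i)}W_i\in\mathcal{C}_r$ because realizability gives $\mathcal{C}$ closed under compressions, and moreover it lies on the boundary because one checks that the chosen kernel vector, transported through the compression, still lies in the kernel of $\sum_j T_j\otimes (W_i^*A^{(i)}W_i)$. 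Summing, $\sum_i W_i^*A^{(i)}W_i\in\mathcal{C}_r$ (a sum of elements of a convex cone) and it is annihilated by a functional of the form $\varphi(B)=\langle y,(\sum_j T_j\otimes B)y\rangle$ for a suitable $y\in\C^r\otimes\C^r$ built from the individual kernel vectors; since $\varphi(u\otimes vv^*)=\langle y,(\mathrm{id}_{\mathcal{H}}\otimes vv^*)y\rangle=\sum_k\|\cdot\|^2>0$ for $v\neq0$ whenever $y$ has full ``$\C^r$-support'', this witnesses membership in $\partial^{\rm ess}\mathcal{C}_r$. The one subtlety is arranging that $y$ really does have full support in the first tensor factor; if it does not, I would argue that we may pass to the support and re-realize on a smaller space, or simply add a harmless extra term to $y$ that does not destroy the kernel property of $\sum_i W_i^*A^{(i)}W_i$.

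\medskip

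\noindent\emph{Sufficiency.} Conversely, assume the stated extension property. By Lemma~\ref{effwisep}, each $\varphi\in\mathcal{C}_r^\vee$ with $\varphi(u\otimes vv^*)>0$ for all $v\neq0$ produces $M_1^\varphi,\ldots,M_d^\varphi\in{\rm Her}_r(\C)$ generating a free spectrahedron $\mathcal{S}^\varphi\supseteq\mathcal{C}$, with $\sum_i u_i M_i^\varphi=I_r$. I would take the intersection $\mathcal{S}:=\bigcap_\varphi\mathcal{S}^\varphi$ over all such $\varphi$; this is still a free spectrahedron (finitely many suffice by a standard compactness/dimension argument on $\mathrm{Her}_r$), it contains $\mathcal{C}$, and it is $r$-dimensional realizable essentially by construction (block-diagonalize the defining tuples). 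It remains to prove $\mathcal{S}=\mathcal{C}$, i.e.\ no element $A\in\mathcal{S}_s$ lies outside $\mathcal{C}_s$. Suppose $A\in\mathcal{S}_s\setminus\mathcal{C}_s$; then $A\in\partial\mathcal{C}_s$ is false — rather $A$ can be separated from $\mathcal{C}_s$, and by scaling along the segment to an interior point of $\mathcal{C}$ we may assume $A\in\partial\mathcal{C}_s$ after all (the point where the segment exits $\mathcal{C}$). Apply the hypothesis with $n=1$, $s_1=s$, $A^{(1)}=A$: we get $0\neq V\in\mathbb M_{s,r}(\C)$ with $V^*AV\in\partial^{\rm ess}\mathcal{C}_r$, so there is $\varphi\in\mathcal{C}_r^\vee$ with $\varphi(u\otimes vv^*)>0$ for all $v\neq0$ and $\varphi(V^*AV)=0$. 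But then Lemma~\ref{effwisep}\ref{sepboundary} (with $V$ as the witnessing matrix) says $A$ lies on the boundary of $\mathcal{S}^\varphi$, hence on the boundary of $\mathcal{S}\subseteq\mathcal{S}^\varphi$. Iterating this — each time we find $A$ on $\partial\mathcal{S}$ rather than its interior — together with a dimension-drop or face-dimension induction shows $A$ must in fact lie in $\mathcal{C}_s$: the essential-boundary hypothesis applied to elements sitting in lower-dimensional faces eventually forces $A\in\mathcal{C}_s$ because $\mathcal{C}$ and $\mathcal{S}$ have the same interior. The cleanest way to package this: show that $\partial\mathcal{S}_s\subseteq\mathcal{C}_s$ for all $s$ by induction on the dimension of the minimal face, using that $\partial^{\rm ess}$ elements of $\mathcal{C}_r$ lie on $\partial\mathcal{S}^\varphi$, and that a common interior point forces equality once the boundary is controlled.

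\medskip

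\noindent\emph{Main obstacle.} I expect the hard part to be the sufficiency direction, specifically the inductive bookkeeping needed to pass from ``every boundary element of $\mathcal{C}$ compresses into $\partial^{\rm ess}\mathcal{C}_r$'' to ``the finitely many Effros--Winkler spectrahedra cut out exactly $\mathcal{C}$ and not something larger.'' The subtlety is that a single application of Lemma~\ref{effwisep} only tells us $A\in\partial\mathcal{S}^\varphi$, not $A\notin\mathcal{S}^\varphi$; one genuinely needs to iterate into the face structure, and making sure this terminates (and that the finitely-many-$\varphi$ reduction is compatible with the iteration) is where the real work lies. The necessity direction is essentially the transpose of the calculation~\eqref{sepcalc} already carried out in the proof of Lemma~\ref{effwisep}, the only care being the full-support condition on the kernel vector used to build the exposing functional.
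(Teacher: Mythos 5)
Your overall strategy is right (both directions should route through Lemma~\ref{effwisep}), but both implementations have genuine gaps, and the sufficiency one is fatal.

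\textbf{Necessity.} You propose building the exposing functional from a single vector $y\in\C^r\otimes\C^r$, $\varphi(B)=\langle y,(\sum_j T_j\otimes B)y\rangle$, and then you correctly worry that $y$ may not have full support in the second tensor factor, so $\varphi(u\otimes vv^*)$ could vanish for some $v\neq 0$. This is a real defect of that choice and your proposed fixes (``re-realize on a smaller space,'' ``add a harmless extra term'') are not obviously compatible with preserving $\varphi(\sum_i V_i^* A^{(i)}V_i)=0$ at the same matrix size $r$. The clean route is to drop the vector functional entirely and use the trace functional $\varphi(B)=\mathrm{tr}(\sum_j\overline{M}_j B_j)$ built directly from the realizing matrices $M_j$. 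One then has $\varphi(u\otimes vv^*)=\mathrm{tr}(\overline{I_r}\,vv^*)=\|v\|^2>0$ unconditionally, and the identity you were groping for becomes exactly
\[
\mathrm{tr}\left(\sum_j \overline{M}_j\sum_i V_i^*A_j^{(i)}V_i\right)=\sum_i\left\langle x^{(i)},\left(\sum_j M_j\otimes A_j^{(i)}\right)x^{(i)}\right\rangle,
\]
which vanishes because each $x^{(i)}$ is a kernel vector. No support condition is needed.

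\textbf{Sufficiency.} The central error is the claim that $\mathcal{S}=\bigcap_\varphi\mathcal{S}^\varphi$ is $r$-dimensional realizable ``by block-diagonalizing the defining tuples.'' Intersecting free spectrahedra corresponds to direct-summing the defining pencils, so the matrix size adds; even two $r\times r$ pencils generically give a $2r\times 2r$ one, and a dimension count on $\mathrm{Her}_r(\C)$ does not reduce an infinite intersection to finitely many in a way that controls the size. Your subsequent ``iterate into the face structure'' step is also not doing what you need: a single application of Lemma~\ref{effwisep}\ref{sepboundary} only puts $A'$ (the boundary exit point, not your original $A\in\mathcal{S}_s\setminus\mathcal{C}_s$) on $\partial\mathcal{S}^\varphi$, which contradicts nothing about $A$. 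The argument that actually works is a compactness argument at a \emph{fixed} matrix size $r$: the normalization $\sum_i u_i M_i=I_r$ together with $\mathcal{S}(M)\supseteq\mathcal{C}$ confines the admissible tuples $(M_1,\ldots,M_d)$ to a compact subset of $\mathrm{Her}_r(\C)^d$. Choose a sequence of boundary elements $A^{(i)}$ dense in $\bigcup_s\partial\mathcal{C}_s$; for each finite initial segment, the hypothesis plus Lemma~\ref{effwisep} produces one $r\times r$ pencil containing $\mathcal{C}$ with all those $A^{(i)}$ on its boundary. An accumulation point of these pencils is a single $r\times r$ pencil defining a free spectrahedron that contains $\mathcal{C}$ and has every $A^{(i)}$ — hence all of $\partial\mathcal{C}$ — on its boundary, and therefore coincides with $\mathcal{C}$. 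So the finishing move is a limit of pencils, not an intersection of spectrahedra, and no face induction is needed.
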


\begin{proof}
First assume that the system is $r$-dimensional realizable, with defining matrices $M_1,\ldots, M_d\in {\rm Her}_r(\mathbb C)$. For $A^{(i)}=(A_1^{(i)},\ldots, A_d^{(i)})\in \partial \mathcal{C}_{s_i}$, there exist vectors $v_1^{(i)},\ldots, v_r^{(i)}\in\mathbb C^{s_i}$  such that
$$
x^{(i)}:=\sum_{k=1}^r e_k\otimes v_k^{(i)}\neq 0,
$$
 and
$$
\left(\sum_{j=1}^d M_j\otimes A_j^{(i)}\right) x^{(i)}=0.
$$
Let $V_i$ be the matrix with columns $v_1^{(i)},\ldots, v_r^{(i)}$. Then $V_i\neq 0$, and some calculation analogous to~\eqref{sepcalc} shows that
\[
	{\rm tr}\left(\sum_{j=1}^d \overline{M}_j \sum_{i=1}^n V_i^* A_j^{(i)} V_i\right) = \sum_{i=1}^n\left\langle x^{(i)},\left(\sum_{j=1}^d M_j\otimes A_j^{(i)}\right) x^{(i)}\right\rangle =0.
\]
This proves that $\sum_i V_i^* A^{(i)} V_i\in\partial^{\rm ess} \mathcal{C}_r$, since the positive functional $B\mapsto {\rm tr}(\sum_j \overline{M}_j B_j)$ is strictly positive on each $u\otimes vv^*$ with $v\neq 0$.

For the converse direction, we use one of the key arguments from \cite{freelmi}. Let $A^{(i)}\in\partial\mathcal{C}_{s_i}$ for $i=1,\ldots,n$ be elements of the boundary. Then the assumption guarantees that there are $V_i\neq 0$ with $\sum_i V_i^* A^{(i)} V_i\in \partial^{\rm ess}\mathcal{C}_r$. This means that there is $\varphi\in\mathcal{C}^{\vee}_r$ with $\varphi(u\otimes vv^*) > 0$ for all $v\in\C^s\setminus\{0\}$ and $\varphi\left(\sum_i V_i^* A^{(i)} V_i\right) = 0$. Since $\varphi\in\mathcal{C}_r^\vee$, this implies that $\varphi\left(V_i^* A^{(i)} V_i\right) = 0$ for each $i$ separately. Hence Lemma~\ref{effwisep} constructs matrices in ${\rm Her}_r(\C)$ which generate a free spectrahedron containing $\mathcal{C}$, and such that the $A^{(i)}$ are in its boundary.

The existence of the order unit implies that the defining matrices of such a free spectrahedron are uniformly bounded. Therefore the tuples of matrices that define free spectrahedra containing $\mathcal{C}$ and satisfy $\sum_i u_i M_i = I_r$ form a compact set in $\mathrm{Her}_r(\mathbb{C})^d$. We now choose a sequence of boundary elements $A^{(i)}\in\mathcal{C}_{s_i}$ that are dense in the boundary at all matrix levels, and consider the sequence of free spectrahedra associated to all finite initial subsequences. By compactness, this sequence of free spectrahedra containing $\mathcal{C}$ must have an accumulation point. The free spectrahedron described by such an accumulation point again contains $\mathcal{C}$, and every $A^{(i)}$ is in its boundary. We therefore have an $r$-dimensional realizable system which has the same boundary as $\mathcal{C}$, and thus coincides with $\mathcal{C}$.
\end{proof}

We will see in Section \ref{sec_min} how this result can be used to show that certain operator systems are not finite-dimensional realizable. 

\section{The Largest Operator System of a Cone}\label{sec_max}

In this and the next section, we start with a closed salient cone $C\subseteq \R^d$ with order unit $u$ and consider operator systems $\left(  C_s\right)_{s\geq 1}$ with $C_1=C$. It is not hard to see that there is always a smallest and a largest one, as has also been noticed in \cite{pauto}\footnote{Let us emphasize that the largest operator systems in our paper are called \emph{minimal} in \cite{pauto}, while our smallest ones are called the \emph{maximal} ones of~\cite{pauto}. The reason is that the norm induced by a set-theoretically larger system is smaller, and vice versa. So if one is interested in the comparison with operator spaces, then it makes sense to adopt the conventions of~\cite{pauto}. We decided to stick with the set-theoretic notions, hoping that this is less confusing to our readers.}. We start with the largest system:
$$
C_s^{\max}:=\left\{ (A_1,\ldots, A_d)\in{\rm Her}_s(\mathbb C)^d \bigm| \forall v\in \mathbb C^s \ (v^*A_1v,\ldots, v^*A_dv)\in C\right\}.
$$
We also write $ C^{\max}$ as shorthand for the family $\left(  C_s^{\max}\right)_{s\geq 1}$. This system is largest in the sense that for any operator system $\left(  D_s\right)_{s\geq 1}$ with $ D_1\subseteq C$, we have $ D_s\subseteq  C_s^{\max}$ for all $s$. 

The following proposition is a technical ingredient for the main result of this section, Theorem~\ref{maxreal}.

\begin{proposition}\label{comei}
For $M,N\in{\rm Her}_s(\mathbb C)$, define
$$
\lambda_1:=\min \left\{ \lambda \in\mathbb R \biggm| \left(\begin{array}{cc}M+\lambda I & N \\N &  I\end{array}\right)\geqslant 0\right\}
$$
and 
$$
\lambda_2 := \min \left\{\lambda\in\mathbb R\mid \vert w_1\vert ^2M  +2\,{\rm Re}(w_1\overline{w}_2)N  + (\lambda \vert w_1\vert^2+\vert w_2\vert^2)I\geqslant 0\quad\forall w\in\mathbb C^2\right\}.
$$
Then $\lambda_2\leq \lambda_1$, and if $\lambda_2=\lambda_1$, then $M$ and $N$ have a common eigenvector.
\end{proposition}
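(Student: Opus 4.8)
The plan is to rewrite both $\lambda_1$ and $\lambda_2$ as explicit maxima over unit vectors $v\in\mathbb C^s$, compare the resulting expressions by the Cauchy--Schwarz inequality to obtain $\lambda_2\leq\lambda_1$, and then extract a common eigenvector from the equality case.

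First I would compute $\lambda_1$. Since the lower-right block of the block matrix defining $\lambda_1$ equals $I$, which is positive definite, positive semidefiniteness of the whole $2\times 2$ block matrix is equivalent to positive semidefiniteness of its Schur complement $M+\lambda I-N^2$. Hence $\lambda_1=\lambda_{\max}(N^2-M)=\max_{\|v\|=1}\bigl(\|Nv\|^2-\langle v,Mv\rangle\bigr)$. Then I would do the same for $\lambda_2$. The instances with $w_1=0$ impose no condition on $\lambda$; for $w_1\neq 0$ I rescale to $w_1=1$ and set $z=w_2$, so the defining inequality becomes $M+2\,\mathrm{Re}(z)\,N+(\lambda+|z|^2)I\geqslant 0$. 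Since only $\mathrm{Re}(z)$ enters the indefinite term while $|z|^2$ only strengthens the positive part, it is enough to take $z=a\in\mathbb R$; expressing the least admissible $\lambda$ through smallest eigenvalues gives $\lambda_2=\max_{\|v\|=1}\sup_{a\in\mathbb R}\bigl(-\langle v,Mv\rangle-2a\langle v,Nv\rangle-a^2\bigr)$, and completing the square in $a$ (with optimum $a=-\langle v,Nv\rangle$) yields $\lambda_2=\max_{\|v\|=1}\bigl(\langle v,Nv\rangle^2-\langle v,Mv\rangle\bigr)$; both maxima are attained by compactness of the sphere.

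With these two formulas in hand, $\lambda_2\leq\lambda_1$ is immediate: for every unit vector $v$, Cauchy--Schwarz gives $\langle v,Nv\rangle^2=|\langle Nv,v\rangle|^2\leq\|Nv\|^2$, so the function maximized for $\lambda_2$ is pointwise dominated by the one maximized for $\lambda_1$. For the equality statement, I would fix a unit vector $v$ attaining the maximum in the formula for $\lambda_2$. If $\lambda_1=\lambda_2$, then for this $v$ both the Cauchy--Schwarz bound and the domination by $\lambda_1$ are equalities. Equality in Cauchy--Schwarz forces $Nv=\mu v$ with $\mu=\langle v,Nv\rangle\in\mathbb R$, and the identity $\langle v,(N^2-M)v\rangle=\lambda_{\max}(N^2-M)$ means $v$ lies in the eigenspace of $N^2-M$ for its largest eigenvalue, so $(N^2-M)v=\lambda_1 v$. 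Combining, $Mv=N^2v-\lambda_1 v=(\mu^2-\lambda_1)v$, so $v$ is a common eigenvector of $M$ and $N$, as claimed.

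The one genuinely non-routine step is the computation of $\lambda_2$: one has to notice that the two complex parameters $w_1,w_2$ collapse --- by homogeneity, the vacuousness of $w_1=0$, and the fact that only $\mathrm{Re}(w_2/w_1)$ matters for the indefinite term while $|w_2/w_1|^2$ only helps --- to a single real parameter, after which the maximization is a completion of the square. The remaining ingredients (the Schur complement for $\lambda_1$, the Cauchy--Schwarz equality condition, and the characterization of vectors attaining the top eigenvalue) are standard.
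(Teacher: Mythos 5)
Your proof is correct. The computations of $\lambda_1$ (Schur complement) and $\lambda_2$ (reduction to real $r=\mathrm{Re}(w_2/w_1)$ and completion of the square) are identical to the paper's, as is the Cauchy--Schwarz comparison giving $\lambda_2\leq\lambda_1$. The route to the common eigenvector in the equality case, however, is genuinely different and somewhat slicker than the paper's. You observe that a maximizer $v$ of the $\lambda_2$-expression, when $\lambda_1=\lambda_2$, must sit at the top of two chained inequalities: Cauchy--Schwarz (forcing $Nv=\mu v$) and the Rayleigh bound for $N^2-M$ (forcing $(N^2-M)v=\lambda_1 v$). Combined, these immediately give $Mv=(\mu^2-\lambda_1)v$, so $v$ is simultaneously an eigenvector of $M$ and $N$. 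The paper extracts only $Nv=\mu v$ from the equality chain and then proves $M$-invariance of $\mathbb{C}v$ via a first-order variational argument, differentiating a quotient $f(\varepsilon)$ along a perpendicular direction $w$ and concluding $v^*Mw=0$. Your argument replaces that calculus with the standard spectral fact that a unit vector attaining the maximal Rayleigh quotient of a Hermitian matrix is an eigenvector, which is arguably cleaner and avoids the explicit derivative computation.
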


\begin{proof}
It is well-known that $\lambda_1=\max_{\Vert v\Vert =1} (v^*N^2v-v^*Mv)$. Concerning $\lambda_2$, it is easy to see that the inequality
$$
\vert w_1\vert^2M  + 2\,{\rm Re}(w_1\overline{w}_2)N  + (\lambda \vert w_1\vert^2+\vert w_2\vert^2)I\geqslant 0\quad  \forall w\in\mathbb C^2
$$
is equivalent to
$$
M+ 2rN+ (\lambda+r^2)I\geqslant 0 \quad \forall r\in\mathbb R,
$$
and thus to
$$
\left(v^*Nv\right)^2\leq v^*Mv +\lambda \quad\forall\Vert v\Vert =1.
$$
Therefore
$$
\lambda_2=\max_{\Vert v\Vert=1} [\left(v^*Nv\right)^2-v^*Mv].
$$
We know that $\left(v^*Nv\right)^2=(Nv)^*vv^*(Nv)\leqslant (Nv)^*I(Nv)= v^*N^2v$ for all $\Vert v\Vert=1$,  and thus $\lambda_2\leq \lambda_1$. Whenever $\left(v^*Nv\right)^2= v^*N^2v$, then $Nv\in {\rm ker}(I-vv^*)$, so $v$ is an eigenvector of $N$. Thus if $\lambda_2=\lambda_1$, then any $v$ that attains $\lambda_2$ must also attain $\lambda_1$, and therefore be an eigenvector of $N$.

We finally show that if $\lambda_2=\max_{\Vert v\Vert=1}[\left(v^*Nv\right)^2-v^*Mv]$ is attained at some eigenvector $v$ of $N$, then $v$ is also an eigenvector of $M$. We assume $\Vert v\Vert=1$ and choose an arbitrary $w$ with $\Vert w\Vert =1$ and $w\perp v$. Consider the smooth function $f:\R\to\R$ defined by
$$
f(\epsilon) := \left(\frac{(v+\epsilon w)^*}{\Vert v+\epsilon w\Vert}N\frac{(v+\epsilon w)}{\Vert v+\epsilon w\Vert}\right)^2- \frac{(v+\epsilon w)^*}{\Vert v+\epsilon w\Vert}M\frac{(v+\epsilon w)}{\Vert v+\epsilon w\Vert}
$$ and compute 
\[
f'(0)=-w^*Mv-v^*Mw,
\]
where the derivative of the first term vanishes since $v$ is an eigenvector of $N$. Since $v$ attains $\lambda_2$, there is a maximum of $f$ at $\epsilon=0$, and therefore $w^*Mv+v^*Mw=0$. This means ${\rm Re}(v^*Mw)=0$, and by using $-iw$ in place of $w$ also  ${\rm Im}(v^*Mw)=0$. Hence $v^*Mw=0$ for all $w$ with $w\perp v$, which means that the orthogonal complement of $v$ is invariant under $M$. But then $\mathbb Cv$ must also be invariant under $M$, so that $v$ is an eigenvector of $M$.
\end{proof}

We can now prove our main result on largest operator systems:

\begin{theorem}\label{maxreal}
The operator system $C^{\max}$ admits a finite-dimensional realization if and only if $C$ is polyhedral.
\end{theorem}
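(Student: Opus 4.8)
The plan is to prove the two implications separately, the direction ``$C$ polyhedral $\Rightarrow$ $C^{\max}$ finite-dimensional realizable'' being the elementary one. Write $C=\{a\in\R^d : \ell_1(a)\geq 0,\dots,\ell_m(a)\geq 0\}$ with $\ell_j(a)=\sum_i c_{ji}a_i$; since $u$ is interior to $C$ we may rescale so that $\ell_j(u)=1$ for all $j$. Unravelling the definition, $(v^*A_1v,\dots,v^*A_dv)\in C$ for every $v\in\C^s$ is equivalent to $\sum_i c_{ji}A_i\geqslant 0$ for every $j$, so putting $M_i:=\mathrm{diag}(c_{1i},\dots,c_{mi})\in{\rm Her}_m(\C)$ yields $C^{\max}=\mathcal S(M_1,\dots,M_d)$ together with $\sum_i u_iM_i=I_m$; hence $C^{\max}$ is $m$-dimensional realizable.

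For the converse I would start from an $r$-dimensional realization and normalize it as in the proof of Lemma~\ref{effwisep} (replacing the defining matrices by $\hat N^{-1/2}(\cdot)\hat N^{-1/2}$), so that $C^{\max}=\mathcal S(M_1,\dots,M_d)$ with $M_i\in{\rm Her}_r(\C)$ and $\sum_i u_iM_i=I_r$. Set $\mathcal M:=\mathrm{span}_\R\{M_1,\dots,M_d\}$; then $I_r\in\mathcal M$ and the $M_i$ are a basis of $\mathcal M$. The goal is to show that $\mathcal M$ is a commutative family: once this is known, simultaneously diagonalizing the $M_i$ exhibits $C=\mathcal S_1(M_1,\dots,M_d)$ as an intersection of finitely many half-spaces, so $C$ is polyhedral.

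Commutation relations come from Proposition~\ref{comei}, used through the identity $\mathcal S_2(M_1,\dots,M_d)=C^{\max}_2$. Fix $M,N\in\mathcal M$ and $\lambda\in\R$. Since $M,N,I_r\in\mathcal M$, the matrix
\[
X_\lambda:=\begin{pmatrix} M+\lambda I_r & N\\ N & I_r\end{pmatrix}=(M+\lambda I_r)\otimes \begin{pmatrix}1&0\\0&0\end{pmatrix}+N\otimes\begin{pmatrix}0&1\\1&0\end{pmatrix}+I_r\otimes\begin{pmatrix}0&0\\0&1\end{pmatrix}
\]
lies in $\mathcal M\otimes_\R{\rm Her}_2(\C)$, hence equals $\sum_i M_i\otimes A_i$ for a unique tuple $A\in{\rm Her}_2(\C)^d$. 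By the reformulations carried out inside the proof of Proposition~\ref{comei}, $X_\lambda\geqslant 0$ iff $\lambda\geq\lambda_1(M,N)$, while $X_\lambda$ is positive on all product vectors of $\C^r\otimes\C^2$ iff $\lambda\geq\lambda_2(M,N)$; and the latter condition is exactly $A\in C^{\max}_2$, because positivity on product vectors of $\sum_i M_i\otimes A_i$ says that $\sum_i(v^*A_iv)M_i\geqslant 0$ for all $v\in\C^2$, i.e.~that $(v^*A_iv)_i\in C$. Since $C^{\max}_2=\mathcal S_2(M_1,\dots,M_d)$, these two positivity notions coincide for $X_\lambda$, forcing $\lambda_1(M,N)=\lambda_2(M,N)$; Proposition~\ref{comei} then gives a common eigenvector of $M$ and $N$, which, by inspecting its proof, may be taken among the unit vectors maximizing $v\mapsto v^*N^2v-v^*Mv$.

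The remaining task — turning ``$\lambda_1(M,N)=\lambda_2(M,N)$ for all $M,N\in\mathcal M$'' into simultaneous diagonalizability of $\mathcal M$ — is the step I expect to be the main obstacle. I would argue by induction on $r$, the case $r=1$ being trivial; for $r\geq 2$ it suffices to produce one unit vector that is an eigenvector of every element of $\mathcal M$, since its orthogonal complement is then $\mathcal M$-invariant and one can restrict and recurse. To find it, pick $N\in\mathcal M$ with a simple largest eigenvalue and top eigenvector $w$ (one checks such $N$ exists unless $\mathcal M$ is already reducible, in which case one reduces $r$ by splitting off the invariant subspace). For any $M_0\in\mathcal M$ and $c>0$ large, the unique maximizer of $v\mapsto v^*N^2v-v^*(M_0-cN)v$ — the top eigenvector of $N^2-M_0+cN$ — lies near $w$; being a common eigenvector of $N$ and $M_0-cN$ by the previous step, it must coincide with $w$, and since $Nw$ is a multiple of $w$ this forces $M_0w$ to be a multiple of $w$. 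Letting $M_0$ range over $\mathcal M$ shows $w$ is a common eigenvector of all of $\mathcal M$. The delicate points of this step are the degenerate-spectrum bookkeeping and ensuring the induction has something to bite on — in particular that the compression $(M_1|_{w^\perp},\dots,M_d|_{w^\perp})$ again falls under the hypothesis, i.e.~defines the largest operator system over its scalar cone, or else that the common eigenvectors so produced already span $\C^r$.
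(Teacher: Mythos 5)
Your easy direction matches the paper's. Your observation that Proposition~\ref{comei} forces a common eigenvector for any pair $M,N$ in the pencil's real span --- by expanding $X_\lambda$ over the basis $M_1,\dots,M_d$ and using $\mathcal S_2(M_1,\dots,M_d)=C^{\max}_2$ to identify positivity on product vectors with genuine positivity --- is correct, and is essentially the same calculation the paper performs (there only for $M=M_1$, $N=M_2$, $M_3=I_r$, after first reducing to $d=3$). Where your proposal and the paper part ways is in what happens next, and that is exactly where you flag, correctly, the gap.

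Pairwise common eigenvectors alone do not force commutativity of the span: take $\mathrm{span}_{\R}\{I_3,\ \mathrm{diag}(1,2,3),\ P\}$ with $P$ the orthogonal projection onto $\mathrm{span}\{e_1,e_2+e_3\}$; every element has $e_1$ as an eigenvector, so every pair trivially shares one, yet the span is noncommutative, and after compressing to $e_1^\perp$ the two nontrivial compressions have no common eigenvector at all. So your recursion is indispensable, and to close it you would need to know that the compression $(M_1|_{w^\perp},\dots,M_d|_{w^\perp})$ again satisfies $\mathcal S(M|_{w^\perp})=(C')^{\max}$ for $C':=\mathcal S_1(M_1|_{w^\perp},\dots,M_d|_{w^\perp})$. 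But compressing to $w^\perp$ drops one linear inequality, which in general enlarges the scalar cone to some $C'\supsetneq C$, and there is no a priori reason the compressed pencil realizes $(C')^{\max}$ --- indeed that would be an instance of the very statement you are trying to prove. This is a genuine gap, and I do not see an elementary fix.

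The paper avoids the issue by a geometric preprocessing step rather than by recursion. It first reduces to $d=3$ via Klee's theorem, and then replaces $C$ by $C\cap\varphi(C)$ for a suitable reflection $\varphi$, arranging a non-polyhedral spectrahedral cone with \emph{no two-dimensional face}. For such a cone, the linear inequality $\ell\geq 0$ carried by the $1\times 1$ block split off from a minimal pencil $M_1,M_2,M_3$ of size $r$ is automatically redundant: if it were not, $C\cap\{\ell=0\}$ would be a $2$-dimensional face. Hence $\mathcal S_1(M')=C$, which forces $\mathcal S(M')=C^{\max}$ (as $\mathcal S(M')\supseteq\mathcal S(M)=C^{\max}$ and $\mathcal S(M')\subseteq (\mathcal S_1(M'))^{\max}=C^{\max}$), giving a realization of size $r-1$ and contradicting minimality. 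Rather than showing the minimal pencil is simultaneously diagonalizable, the paper shows that in the non-polyhedral case no minimal pencil can exist at all --- which is precisely the move your induction is missing.
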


\begin{proof}
One direction is clear: if $C=\left\{ a\in \R^d\mid \ell_1(a)\geq 0, \ldots, \ell_r(a)\geq 0\right\}$, with linear functionals $\ell_i:\R^d\to\R$ such that $\ell_i(u)=1$ for all $i$, then for all $s\geq 1$,
$$
C_s^{\max}=\left\{ A\in{\rm Her}_s(\C)^d \bigm| (\ell_1\otimes\id)(A)\geqslant 0,\ldots, (\ell_r\otimes\id)(A)\geqslant 0\right\},
$$
and this gives rise to an $r$-dimensional realization with diagonal matrices. 

We now show that the largest system of a non-polyhedral cone does not admit a finite-dimensional realization. First, we argue that we can restrict to the case $d=3$. Indeed, every  non-polyhedral cone $C$ admits a $3$-dimensional linear section through $0$ and the order unit $u$, which is not polyhedral either~\cite[Theorem~4.7]{klee}, and a possible finite-dimensional realization of $ C^{\max}$ would restrict to a finite-dimensional realization of the largest system over this $3$-dimensional intersection-cone. So the case $d=3$ is enough to deal with. Moreover, we can assume that $C$ itself is spectrahedral, since otherwise there is not even a finite-dimensional realization of any system that coincides with $C$ at scalar level.

Now if $C\subseteq\R^3$ is non-polyhedral but spectrahedral, then there is an isomorphism $\varphi\in {\rm GL}_3(\R)$ such that $C\cap\varphi(C)$ has nonempty interior, but does not have a face of dimension $2$. Indeed, the Zariski closure of the boundary of $C$ is an algebraic variety, and hence there must be a smooth point with strict curvature by non-polyhedrality. A  reflection $\varphi$ at a suitable hyperplane close to such a point will then work. Since $ C_s^{\max}\cap  D_s^{\max}=(C\cap D)_s^{\max}$ holds for any two cones $C$ and $D$, and the intersection of two systems with finite-dimensional realization has a finite-dimensional realization, we can thus assume that $C$ does not have a face of dimension $2$.
 
Now assume  $M_1,M_2, M_3\in{\rm Her}_r(\mathbb C)$ are defining matrices for $C^{\max}$ of minimal matrix size $r$. For any $A=(A_1,A_2,A_3)\in{\rm Her}_s(\mathbb C)^3$, we then have
\begin{align*}
\sum_i M_i\otimes A_i \geqslant 0 & \:\Leftrightarrow\: A\in C_s^{\max} \\ 
& \Leftrightarrow\: v^*Av\in C=C_1^{\max} \quad \forall v\in\mathbb C^s\\
& \Leftrightarrow\: \sum_i M_i\cdot v^*A_iv\geqslant 0\quad \forall v\in \mathbb C^s \\
& \Leftrightarrow\: \sum_i w^*M_iw\cdot v^*A_iv \geq 0 \quad \forall v\in\mathbb C^s, w\in\mathbb C^r\\
& \Leftrightarrow\: \left\langle\left(\sum_i M_i\otimes A_i\right) x,x\right\rangle\geq 0\ \forall \mbox{ elementary tensors } x\in\mathbb C^r\otimes \mathbb C^s.
\end{align*}
Via a suitable change of basis in $\C^3$, we can arrive at $M_3 = I_r$. The above equivalence then entails that the matrix
\[
\left(\begin{array}{cc}M_1+\lambda I_r & M_2 \\ M_2 &  I_r\end{array}\right)
\]
is positive if and only if it is positive on all vectors of the form $\left(\begin{array}{c} w_1 v \\ w_2 v\end{array}\right)$ for $v\in\C^r$ and $w=(w_1,w_2)\in\C^2$. Using Proposition~\ref{comei}, it follows that $M_1$ and $M_2$, and trivially also $M_3$, have a common eigenvector. Thus we can split off a $1\times 1$-block in each $M_i$. Since the corresponding linear inequality is not needed in the linear inequalities description of $C$ (because there is no face of dimension $2$), it is also redundant in the description of the largest system. This contradicts the minimality of $r$.
\end{proof}

\section{The Smallest Operator System of a Cone}\label{sec_min}

Again let $C\subseteq \mathbb R^d$ be a closed salient convex cone with order unit $u$. Define
$$
C_s^{\min}:=\left\{\sum_i c_i\otimes P_i \biggm| c_i\in C, P\in{\rm Her}_s(\mathbb C), P\geqslant 0\right\}.
$$ 

\begin{lemma}
$C^{\min}$ is the smallest operator system with $C_1^{\min}=C$.
\end{lemma}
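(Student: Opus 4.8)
The plan is to verify the two defining properties of an operator system for the family $C^{\min}=\left(C_s^{\min}\right)_{s\geq 1}$, and then to show it sits inside every operator system $\left(D_s\right)_{s\geq 1}$ with $D_1=C$.

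First I would check that each $C_s^{\min}$ is a closed, salient convex cone in $\mathrm{Her}_s(\C)^d$ containing $C$ at level $s=1$. Convexity and closure under positive scaling are immediate from the description as sums $\sum_i c_i\otimes P_i$. For closedness, note that $C_s^{\min}$ is the image of the closed cone $C^n\times (\mathrm{Her}_s(\C)_{\geq 0})^n$ under a continuous map for $n$ large enough: by Carath\'eodory's theorem for cones, every element of $C_s^{\min}$ is already a sum of boundedly many elementary tensors (at most $\dim \mathbb M_s(\mathcal V)_h = d s^2$ of them), so $C_s^{\min}$ is the continuous image of a cone which, after intersecting with a compact slice determined by the order unit, is compact; hence the image is closed. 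Saliency: if both $X$ and $-X$ lie in $C_s^{\min}$, then pairing with the functional $\mathrm{tr}$ in the matrix variable and any $\ell\in C^\vee$ shows $(\ell\otimes\mathrm{tr})(X)=0$, and running $\ell$ over enough functionals separating $C$ (together with $C$ salient) forces $X=0$; alternatively, $C^{\min}_s \subseteq C^{\max}_s$ which is salient. For the compatibility axiom $A\in C_s^{\min},\ V\in\mathbb M_{s,t}(\C)\Rightarrow V^*AV\in C_t^{\min}$: if $A=\sum_i c_i\otimes P_i$ with $P_i\geq 0$, then $V^*AV=\sum_i c_i\otimes(V^*P_iV)$ and each $V^*P_iV\geq 0$, so $V^*AV\in C_t^{\min}$. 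For the order unit, take $u\in C=C_1^{\min}$; then $u\otimes I_s = u\otimes I_s\in C_s^{\min}$ trivially, and since $C^{\min}$ is contained in $C^{\max}$ for which $u\otimes I_s$ is an order unit, $u\otimes I_s$ is an interior point of $C_s^{\min}$ as well (being interior to a set-theoretically larger cone does not immediately give interiority to the smaller one, so instead I argue directly: every $A\in\mathrm{Her}_s(\C)^d$ can be written $\sum_j v^{(j)}\otimes M_j$, use the order-unit property at level $1$ to get $\pm v^{(j)}+\lambda u\in C$, decompose each $M_j=P_j-Q_j$ into positive parts, and conclude as in Remark (c) of the excerpt that $A+\gamma\lambda(u\otimes I_s)\in C_s^{\min}$ for suitable $\gamma$; this shows $u\otimes I_s$ is an order unit).

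Next, minimality. Let $\left(D_s\right)_{s\geq 1}$ be any operator system with $D_1=C$. I must show $C_s^{\min}\subseteq D_s$ for all $s$. Take a generator $c\otimes P$ with $c\in C$ and $P\in\mathrm{Her}_s(\C)$, $P\geq 0$. Write $P = V^*V$ for some $V\in\mathbb M_{1,s}(\C)$ if $P$ has rank one, i.e.\ $P = ww^*$; more generally $P=\sum_k w_k w_k^*$. It suffices to treat $c\otimes ww^*$. Now $c\in C=D_1$, so applying the compatibility axiom of $D$ to $c\in D_1$ and the row vector $w^*\in\mathbb M_{1,s}(\C)$ gives $(w^*)^* c\, w^* = c\otimes ww^* \in D_s$ — wait, I should be careful with the direction: with $V\in\mathbb M_{1,s}(\C)$, $V^* (c) V\in\mathrm{Her}_s(\mathcal V)_h$ equals $c\otimes (V^*V)$, and $V^*V = $ the rank-one psd matrix $ww^*$ where $w=\overline{V}^{\,T}$; this lands in $D_s$ by the axiom. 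Summing over $k$ and then over $i$ (since $D_s$ is a convex cone) shows every $\sum_i c_i\otimes P_i\in D_s$, i.e.\ $C_s^{\min}\subseteq D_s$.

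I expect the main obstacle to be the verification that $u\otimes I_s$ is a genuine order unit (interior point) of $C_s^{\min}$, since the ``small'' system could a priori fail to have interior points even though its scalar level does; the fix is the explicit decomposition argument borrowed from Remark (c), which requires only the scalar-level order-unit property and the observation that arbitrary Hermitian matrix coefficients split as differences of psd matrices. A secondary point requiring a little care is closedness of $C_s^{\min}$ in the finest locally convex topology on the finite-dimensional space $\mathbb M_s(\mathcal V)_h$ (which coincides with the Euclidean topology there), handled by the Carath\'eodory-plus-compactness argument above. Everything else is routine bookkeeping with elementary tensors.
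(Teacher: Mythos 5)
Your overall approach matches the paper's: you verify the operator-system axioms, reduce closedness to Carath\'eodory plus a compactness argument, and deduce minimality from the compression axiom applied to rank-one positive matrices. The axiom checks, the order-unit argument borrowed from Remark~(c), and the minimality argument are all correct and spelled out in more detail than the paper, which simply declares minimality ``clear'' and focuses on closedness.

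There is, however, a genuine gap in your closedness argument. You assert that since the parametrizing cone $K = C^n \times \bigl(\mathrm{Her}_s(\C)_{\geq 0}\bigr)^n$ has a compact base $B$, its continuous linear image $\pi(K) = C_s^{\min}$ must be closed. This inference fails in general: $\pi(K)$ is the conical hull of the compact set $\pi(B)$, and the conical hull of a compact set is guaranteed to be closed only when $0 \notin \pi(B)$. Here $0$ \emph{does} lie in $\pi(B)$, because the kernel of the map $(c_i,P_i)\mapsto \sum_i c_i\otimes P_i$ meets $K$ nontrivially (e.g.\ take $c_1\neq 0$ and all $P_i=0$); so the argument as written does not go through. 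The paper sidesteps this by first proving that the set of \emph{single} elementary tensors $E = \{c\otimes P : c\in C,\ P\geq 0\}$ is closed (normalize: elementary tensors of norm $1$ are products of normalized factors, hence form a compact set), and only then invoking Carath\'eodory together with saliency of the ambient cone $C_s^{\max}$ to conclude that bounded sums of elements of the closed set $E$ form a closed cone. You should either establish closedness of $E$ and then run the standard ``pass to a subsequence of the summands'' argument using a strictly positive functional on $C_s^{\max}$, or otherwise justify why the parametrization you chose avoids the degeneracy at $0$.
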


\begin{proof}
It is clear that $C^{\min}$ is contained in any operator system extending $C$. 

It remains to check that each $C_s^{\min}$ is closed. By Caratheodory's theorem, the number of elementary tensors required to reach every $A = \sum_i c_i\otimes P_i$ is uniformly bounded. Hence it is enough to show that the set of elementary tensors $\{c\otimes P\: :\: c\in C, P\in{\rm Her}_s(\mathbb{C})_+\}$ is closed. By choosing any tensor norm, it follows that the elementary tensors of norm $1$ are tensor products of elements of norm $1$ and therefore form a compact set.
\end{proof}

Since it will be a crucial ingredient in our main result, we compute the essential boundary of a particular smallest system:

\begin{example}\label{essbound}
Consider the cone over the square, i.e.~$C={\rm cc}\{v_1,v_2,v_3,v_4\}\subseteq\mathbb R^3$, where
\begin{equation}\label{squarevertices}
v_1=(1,-1,1),\quad v_2=(-1,1,1),\quad v_3=(1,1,1),\quad v_4=(-1,-1,1),
\end{equation}
and $u=(0,0,1)$.  For $A_1,A_2,A_3,A_4\geqslant 0$, we have
$$
v_1\otimes A_1+v_2\otimes A_2+v_3\otimes A_3+v_4\otimes A_4\in\partial^{\rm ess}C_s^{\min}
$$
if and only there is some $U\in{\rm GL}_s(\mathbb C)$ with
\begin{equation}\label{orthoims}
\im(UA_1)\perp {\rm im}(UA_2) \:\text{ and }\: \im(UA_3)\perp{\rm im}(UA_4).
\end{equation}
In fact, assume $\varphi\colon{\rm Her}_s(\mathbb C)^3\rightarrow \mathbb R$ is nonnegative on $C_s^{\min}$. Then
$$
\varphi(X)={\rm tr}\left(X_1 M_1+X_2 M_2+X_3 M_3\right)
$$
for some $M\in {\rm Her}_s(\mathbb C)^3$ with
$$
\pm M_1\pm M_2+M_3\geqslant 0
$$
for all four sign combinations. Furthermore,  $\varphi(u\otimes vv^*)>0$ for all $0\neq v\in\mathbb C^s$ just means that $M_3>0$. So there is some $U\in{\rm GL}_s(\mathbb C)$ with $(U^{-1})^*M_3 U^{-1} = I_s$. Now assume
\begin{align*}
0 & = \varphi\left(v_1\otimes A_1+v_2\otimes A_2+v_3\otimes A_3+v_4\otimes A_4\right)\\
& = \mathrm{tr}\left( A_1(M_1-M_2+M_3) +A_2(-M_1+M_2+M_3) \right.\\
& \left.\quad +A_3(M_1+M_2+M_3)+A_4(-M_1-M_2+M_3)\right).
\end{align*}
With $S=M_1+M_2$ and $D=M_1-M_2$, the above positivity conditions make this equivalent to
$$
A_1\perp (M_3+D),\quad A_2\perp (M_3-D),\quad A_3\perp (M_3+S),\quad A_4\perp (M_3-S),
$$
where we use the standard inner product $\langle X,Y\rangle={\rm tr}(Y^*X)$ on matrices. Thus with $\widetilde D:=(U^{-1})^*DU^{-1}$,
\begin{equation}\label{orthoD}
UA_1U^*\perp (I_s+\widetilde D),\qquad UA_2U^*\perp (I_s-\widetilde D),
\end{equation}
	and similarly for the other two orthogonality relations involving $\widetilde{S} = (U^{-1})^*SU^{-1}$. Using $-I_s\leq \widetilde D\leq I_s$, the spectral decomposition of $\widetilde D$, and the fact that eigenvectors to different eigenvalues are orthogonal, we see that $UA_1U^*$ and $UA_2U^*$ have orthogonal images, and similarly for $UA_3U^*$ and $UA_4U^*$ with $S$ in place of $D$. This proves~\eqref{orthoims}.

Tracing back this argument, we start with \eqref{orthoims}, construct $\widetilde{D}$ with spectrum in $[-1,+1]$ such that \eqref{orthoD} holds, and similarly for $\widetilde{S}$. This determines $M_1$, $M_2$ and $M_3$ via the above equations, and all desired properties hold by construction.
\end{example}

Before we can prove our main result of this section, we need some more  preliminaries. 

\begin{definition}
$C$ has a {\it universal spectrahedral description} of dimension $r$ if there are $M_1,\ldots,M_d\in{\rm Her}_r(\mathbb C)$ with
$$
\sum_{i=1}^d M_iu_i=I_r,\qquad  C=\mathcal S_1(M_1,\ldots,M_d),
$$
and whenever $N_1,\ldots,N_d\in{\rm Her}_t(\mathbb C)$ with $\sum_i N_iu_i=I_t$, then
$$
\mathcal S_1(M_1,\ldots,M_d)\subseteq \mathcal S_1(N_1,\ldots,N_d)\ \Rightarrow \forall s\geq 1: \mathcal S_s(M_1,\ldots,M_d)\subseteq \mathcal S_s(N_1,\ldots,N_d).
$$
\end{definition} 

This means that the representation detects inclusion of free spectrahedra  already at scalar level. This is closely related to realizations of smallest operator systems:

\begin{proposition}\label{univers}
Let $C\subseteq\mathbb R^d$ be a closed salient cone. Then the following are equivalent:
\begin{itemize}
\item[(i)] The system $C^{\min}$ is finite-dimensional realizable.
\item[(ii)] $C$ admits a universal spectrahedral description.
\end{itemize}
\end{proposition}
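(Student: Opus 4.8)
The plan is to establish the equivalence by characterizing both conditions in terms of the largest operator system of the dual cone. First I would recall the standard duality between smallest and largest operator systems: for the dual cone $C^\vee \subseteq \R^d$ (with some choice of dualizing order unit), one has a natural pairing under which $(C^{\min})^\vee$ matches $(C^\vee)^{\max}$ at every matrix level, and vice versa. The key point that makes the proposition work is that a finite-dimensional realization of an operator system is the same data as a free spectrahedron $\mathcal{S}(M_1,\dots,M_d)$ with $\sum_i u_i M_i = I_r$ (cf.\ Remark~\ref{FSvsOS}), so the question ``is $C^{\min}$ finite-dimensional realizable?'' is literally ``is there a free spectrahedron whose scalar level is $C$ and whose higher levels form the smallest possible operator system over $C$?''

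For the implication (i) $\Rightarrow$ (ii): suppose $C^{\min}$ is realized by $M_1,\dots,M_d\in{\rm Her}_r(\C)$, so that $\mathcal{S}(M_1,\dots,M_d) = C^{\min}$ as operator systems, with $\sum_i u_i M_i = I_r$ and $\mathcal{S}_1(M_1,\dots,M_d) = C$. I claim this is a universal spectrahedral description. Indeed, let $N_1,\dots,N_d\in{\rm Her}_t(\C)$ with $\sum_i u_i N_i = I_t$ and $\mathcal{S}_1(M) \subseteq \mathcal{S}_1(N)$. The free spectrahedron $\mathcal{S}(N_1,\dots,N_d)$ is itself an operator system with scalar level $\supseteq C$, hence — by the defining minimality property of $C^{\min}$ applied to the operator system $\mathcal{S}(N)\cap C^{\max}$, or more directly because any operator system whose scalar level contains $C$ contains $C^{\min}$ levelwise — we get $C^{\min}_s \subseteq \mathcal{S}_s(N_1,\dots,N_d)$ for all $s$, i.e.\ $\mathcal{S}_s(M)\subseteq\mathcal{S}_s(N)$ for all $s$. (A small point to check: $\mathcal{S}_1(N)$ need not equal $C$, only contain it, so one restricts $N$'s system to its intersection with $C^{\max}$, which still contains $C^{\min}$ and is still a free spectrahedron once one clips to the relevant face; alternatively, one observes $C^{\min}_s \subseteq D_s$ for \emph{any} operator system $D$ with $C\subseteq D_1$, which is exactly the content of the preceding lemma.) This gives (ii).

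For (ii) $\Rightarrow$ (i): given a universal spectrahedral description $M_1,\dots,M_d\in{\rm Her}_r(\C)$ with $\sum_i u_i M_i = I_r$ and $\mathcal{S}_1(M) = C$, I claim $\mathcal{S}(M) = C^{\min}$ as operator systems, which exhibits the finite-dimensional realization. The inclusion $C^{\min}_s\subseteq \mathcal{S}_s(M)$ is automatic since $\mathcal{S}(M)$ is an operator system with scalar level $C$. For the reverse inclusion, one must show $\mathcal{S}_s(M)\subseteq C^{\min}_s$; the idea is to exhibit, for each $s$, enough linear functionals cutting out $C^{\min}_s$ and verify they are nonnegative on $\mathcal{S}_s(M)$. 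A clean way: $C^{\min}_s$ is cut out by the functionals $A\mapsto \varphi(V^*AV)$ where $\varphi\in C^\vee$ ranges over the scalar dual cone and $V$ over $\mathbb{M}_{s,1}$ — wait, more carefully, by the Effros--Winkler machinery (Lemma~\ref{effwisep}) every $\varphi\in (C^{\min}_s)^\vee$ with $\varphi(u\otimes vv^*)>0$ produces a free spectrahedron $\mathcal{S}(N)$ containing $C^{\min}$ with $\mathcal{S}_1(N)\supseteq C = \mathcal{S}_1(M)$; the universality hypothesis then forces $\mathcal{S}_s(M)\subseteq\mathcal{S}_s(N)$, hence $\varphi\geq 0$ on $\mathcal{S}_s(M)$. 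Since such $\varphi$ (together with the boundary ones obtained by a limiting argument, exactly as in the proof of Theorem~\ref{finrea}) separate $C^{\min}_s$ from its complement, we conclude $\mathcal{S}_s(M)\subseteq C^{\min}_s$, and the two systems agree.

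The main obstacle I anticipate is the bookkeeping in (ii) $\Rightarrow$ (i) around functionals $\varphi$ that are \emph{not} strictly positive on all $u\otimes vv^*$ (i.e.\ the boundary case), since Lemma~\ref{effwisep} only applies in the strict case; this is handled by the same compactness-and-density limiting argument used at the end of the proof of Theorem~\ref{finrea}, taking a dense family of such $\varphi$ and passing to an accumulation point of the associated uniformly-bounded tuples $N$. A secondary subtlety is making the duality/minimality statements precise when $\mathcal{S}_1(N)$ strictly contains $C$: one should phrase everything in terms of ``every operator system $D$ with $C\subseteq D_1$ satisfies $C^{\min}\subseteq D$,'' which is already proved, rather than insisting on equality at scalar level.
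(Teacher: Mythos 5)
Your proposal is correct and takes essentially the same route as the paper: (i)$\Rightarrow$(ii) follows from minimality of $C^{\min}$, and (ii)$\Rightarrow$(i) combines Effros--Winkler separation (Lemma~\ref{effwisep}) with the universality hypothesis. The compactness-and-density limiting argument you anticipate needing for non-strictly-positive functionals is not necessary: given $A\notin C_t^{\min}$, pick any $\varphi_1\in(C_t^{\min})^\vee$ with $\varphi_1(A)<0$ and perturb by a small multiple of some $\varphi_0$ in the interior of $(C_t^{\min})^\vee$; the result is still negative at $A$, still nonnegative on $C_t^{\min}$, and now strictly positive on every $u\otimes vv^*$ with $v\neq 0$, so Lemma~\ref{effwisep} applies directly. (Equivalently, in your dual formulation, the strictly positive functionals are dense in $(C_s^{\min})^\vee$ and the pairing is continuous in finite dimensions, so no passage to accumulation points of $N$-tuples is required.) The opening remark about a duality pairing $(C^{\min})^\vee$ with $(C^\vee)^{\max}$ plays no role in the argument you actually give and can be omitted.
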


\begin{proof}
(i)$\Rightarrow$(ii): Let $M_1,\ldots,M_d$ realize the system. Whenever
$$
C=\mathcal S_1(M_1,\ldots,M_d)\subseteq \mathcal S_1(N_1,\ldots,N_d),
$$
then $\mathcal S_s(M_1,\ldots,M_d)=C_s^{\min}\subseteq \mathcal S_s(N_1,\ldots,N_d)$ for all $s$, since the system is the smallest.

(ii)$\Rightarrow$(i): Let $M_1,\ldots,M_d$ be matrices that form a universal spectrahedral description of $C$. Then $C_s^{\min}\subseteq \mathcal S_s(M_1,\ldots,M_d)$ for all $s\geq 1$. Now assume $A\notin C_t^{\min}$ for some $A\in\mathbb{M}_t(\C)^d$. Then by choosing a separating positive functional and applying Lemma~\ref{effwisep}, there are $N_1,\ldots,N_d$ with $\sum_i N_iu_i=I_t$ and $C^{\min}\subseteq \mathcal S(N_1,\ldots,N_d)$, and such that $A\notin \mathcal S_t(N_1,\ldots,N_d)$. From
$$
\mathcal S_1(M_1,\ldots,M_d)=C=C_1^{\min}\subseteq \mathcal S_1(N_1,\ldots,N_d),
$$
we obtain $\mathcal S(M_1,\ldots,M_d)\subseteq\mathcal S(N_1,\ldots,N_d)$ since the description is universal. Thus $A\notin \mathcal S_t(M_1,\ldots,M_d)$. We have therefore shown $C_s^{\min}=\mathcal S_s(M_1,\ldots,M_d)$ for all $s\geq 1$.
\end{proof}

\begin{lemma}\label{reduce}
Let $H\subseteq \mathbb R^d$ be a subspace that intersects ${\rm int}(C)$, and consider the cone $\tilde C := C\cap H$. Assume that whenever $\tilde C\subseteq \tilde S\subseteq H$ for some spectrahedral cone $\tilde S$, then the matrix pencil defining $\tilde S$ admits an extension to a pencil defining a spectrahedral cone $S\subseteq \mathbb R^d$ containing $C$. If $C^{\min}$ is finite-dimensional realizable, then so is $\tilde{C}^{\min}$.
\end{lemma}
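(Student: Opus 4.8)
The plan is to transport a finite-dimensional realization of $C^{\min}$ to one of $\tilde C^{\min}$ by composing with the inclusion $H\hookrightarrow\mathbb R^d$, and then to exploit the extension hypothesis exactly at the point where universality could fail. Concretely, by Proposition \ref{univers} it suffices to show that $\tilde C$ admits a universal spectrahedral description. Let $M_1,\ldots,M_d\in{\rm Her}_r(\mathbb C)$ be a universal spectrahedral description of $C$, which exists by Proposition \ref{univers} applied to the hypothesis that $C^{\min}$ is finite-dimensional realizable. Choose a basis of $H$, or more conveniently work with the restriction of the pencil: restricting the linear map $a\mapsto\sum_i a_iM_i$ to $H$ gives a pencil $(M'_1,\ldots,M'_e)$ on $H\cong\mathbb R^e$ whose scalar spectrahedron is exactly $C\cap H=\tilde C$, and whose value at the order unit is still $I_r$ since $H$ meets ${\rm int}(C)$ and we may pick the order unit of $\tilde C$ to be that of $C$.

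Next I would verify that this restricted pencil is a universal spectrahedral description of $\tilde C$. So suppose $N'_1,\ldots,N'_e\in{\rm Her}_t(\mathbb C)$ define a spectrahedral cone $\tilde S\subseteq H$ with $\tilde C\subseteq\tilde S$ and $\sum_j N'_ju_j=I_t$. By the extension hypothesis in the statement of the lemma, the pencil $(N'_j)$ extends to a pencil $(N_1,\ldots,N_d)$ on $\mathbb R^d$ defining a spectrahedral cone $S\subseteq\mathbb R^d$ with $C\subseteq S$; and we may normalize so that $\sum_i N_iu_i=I_t$ (the value at the order unit is already $I_t$ on $H$, and any correction at an interior point can be absorbed as in Remark~\ref{FSvsOS} / the conjugation trick of Lemma~\ref{effwisep}). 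Since $(M_i)$ is a universal description of $C$, the scalar inclusion $C\subseteq S$ upgrades to the inclusion of free spectrahedra $\mathcal S_s(M_1,\ldots,M_d)\subseteq\mathcal S_s(N_1,\ldots,N_d)$ for all $s$. Restricting this inclusion back along $H^s\hookrightarrow(\mathbb R^d)^s$—that is, evaluating both free spectrahedra on tuples $A$ all of whose coordinates lie in the $H$-directions—yields $\mathcal S_s(M'_1,\ldots,M'_e)\subseteq\mathcal S_s(N'_1,\ldots,N'_e)$ for all $s$, which is precisely the universality condition for the restricted pencil. Hence $\tilde C^{\min}$ is finite-dimensional realizable by Proposition~\ref{univers} again.

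I expect the main obstacle to be the bookkeeping around order units and normalization when extending the pencil $(N'_j)$ to $(N_j)$: the lemma's hypothesis guarantees an extension to a pencil defining a spectrahedron containing $C$, but one must argue that this extension can be chosen (or post-processed, via conjugation by $\hat N^{-1/2}$ as in Lemma~\ref{effwisep}) so that its value at the order unit is the identity, so that it is an admissible competitor in the definition of a universal spectrahedral description. A secondary subtlety is checking that ``restricting a free spectrahedron inclusion to the subspace $H$'' does what one wants: the free spectrahedron $\mathcal S_s(M'_1,\ldots,M'_e)$ consists of tuples indexed by a basis of $H$, and one has to match this cleanly with the sub-tuples of $\mathcal S_s(M_1,\ldots,M_d)$ supported in the $H$-directions—this is routine linear algebra but is where the hypothesis ``$H$ intersects ${\rm int}(C)$'' is used to keep the order unit interior and the cones salient.
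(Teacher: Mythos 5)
Your proposal is correct and follows exactly the same route as the paper's own (very terse, two-sentence) proof: translate via Proposition~\ref{univers} to the language of universal spectrahedral descriptions, restrict the pencil realizing $C^{\min}$ to $H$, and use the lifting hypothesis to upgrade scalar inclusions over $H$ to scalar inclusions over $\mathbb R^d$, where universality of the original pencil applies. The normalization and order-unit bookkeeping you flag as a potential obstacle is indeed routine (choose the order unit in $H\cap{\rm int}(C)$, so the extension automatically inherits the identity at $u$), and your elaboration of the restriction step is exactly what the paper leaves implicit.
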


\begin{proof}
Assume $C^{\min}$ is finite-dimensionally realized by $M_1,\ldots, M_d$. Then the restriction of the pencil spaned by $M_1,\ldots,M_d$ to $H$ yields a universal spectrahedral description of $\tilde C$, by the assumed lifting property. In view of Proposition \ref{univers}, $\tilde C^{\min}$ is finite-dimensional realizable. 
\end{proof}

\begin{lemma}\label{face}
Let  $\tilde C$ be a face of $C$. If $C^{\min}$ is finite-dimensional realizable, then so is $\tilde{C}^{\min}$.
\end{lemma}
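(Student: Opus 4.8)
The plan is to produce a finite-dimensional realization of $\tilde C^{\min}$ by restricting, and then compressing, a realization of $C^{\min}$. Write $H:={\rm span}_{\R}(\tilde C)\subseteq\R^d$, regard $\tilde C$ as a salient cone in $H$ with order unit some $a_0\in{\rm relint}(\tilde C)$, and view ${\rm Her}_s(H)=H\otimes_{\R}{\rm Her}_s(\C)$ as a linear subspace of ${\rm Her}_s(\C)^d$. The core of the argument is the identity
\[
\tilde C_s^{\min}\;=\;C_s^{\min}\cap{\rm Her}_s(H)\qquad\text{for all }s\ge 1,
\]
in which the inclusion "$\subseteq$" is immediate from $\tilde C\subseteq C$ and $\tilde C\subseteq H$.

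To prove the reverse inclusion, I would fix a finite-dimensional realization $M_1,\dots,M_d\in{\rm Her}_r(\C)$ of $C^{\min}$, write $M(a):=\sum_i a_iM_i$, and set $W:={\rm range}(M(a_0))$. Since $a_0$ is a relative interior point of $\tilde C$, for any $c\in\tilde C$ one has $a_0-\eps c\in\tilde C\subseteq C$ for small $\eps>0$, hence $M(a_0)\geqslant\eps M(c)\geqslant 0$ and so ${\rm range}(M(c))\subseteq W$; being Hermitian with range inside $W$, $M(c)$ is block-diagonal, $M(c)=\hat M(c)\oplus 0$ with respect to $\C^r=W\oplus W^{\perp}$. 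Taking differences, $M(a)=\hat M(a)\oplus 0$ for every $a\in H$, and moreover $\tilde C=\{a\in C\mid{\rm range}(M(a))\subseteq W\}$ (the nontrivial inclusion: if $a\in C$ with ${\rm range}(M(a))\subseteq W$ then $M(a_0)-\eps M(a)\geqslant 0$ for small $\eps>0$, so $a_0=\eps a+(a_0-\eps a)$ realizes $a_0$ as a sum of two elements of $C$, both of which lie in the face $\tilde C$). Now take $A\in C_s^{\min}\cap{\rm Her}_s(H)$ and, using the spectral decompositions of the positive matrices in the definition of $C_s^{\min}$, write $A=\sum_k c_k\otimes v_kv_k^*$ with $c_k\in C$ and $v_k\in\C^s$. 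For each $e\in W^{\perp}$ the functional $a\mapsto\langle e,M(a)e\rangle$ vanishes on $H$ (since $M(a)=\hat M(a)\oplus 0$ there), so the linear map $B\mapsto\sum_i\langle e,M_ie\rangle B_i$ on ${\rm Her}_s(\C)^d$ vanishes on ${\rm Her}_s(H)$; evaluating it at $A$ gives $\sum_k\langle e,M(c_k)e\rangle\,v_kv_k^*=0$. Each summand is positive semidefinite, hence zero, so $\langle e,M(c_k)e\rangle=0$ whenever $v_k\ne 0$; together with $M(c_k)\geqslant 0$ this forces $M(c_k)e=0$ for all $e\in W^{\perp}$, i.e.\ ${\rm range}(M(c_k))\subseteq W$, i.e.\ $c_k\in\tilde C$. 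Hence $A=\sum_{v_k\ne 0}c_k\otimes v_kv_k^*\in\tilde C_s^{\min}$, establishing the identity.

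To finish, I would expand the pencil $M|_H$ in coordinates on $H$; using $M(a)=\hat M(a)\oplus 0$ for $a\in H$, the set $C_s^{\min}\cap{\rm Her}_s(H)$ is exactly the free spectrahedron over $H$ defined by the compressed pencil $\hat M$ with values in ${\rm Her}_{\dim W}(\C)$, and since $\hat M(a_0)=M(a_0)|_W$ is positive definite, conjugating by $\hat M(a_0)^{-1/2}$ normalizes the pencil so that $a_0$ maps to $I_{\dim W}$. By the identity above this pencil realizes $\tilde C^{\min}$, which is therefore finite-dimensional realizable. (Equivalently, the normalized compressed pencil is a universal spectrahedral description of $\tilde C$, so one could instead conclude via Proposition~\ref{univers}.)

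The only genuinely non-routine point is the reverse inclusion of the displayed identity — ruling out that an element of $C_s^{\min}$ which happens to lie in ${\rm Her}_s(H)$ might only be expressible through cone elements lying outside $\tilde C$. This is handled by pairing the $W^{\perp}$-diagonal functionals coming from the realization with the element and using positive semidefiniteness of the $M(c_k)$ a second time. Everything else — that ${\rm relint}(\tilde C)$ controls the ranges ${\rm range}(M(c))$ for $c\in\tilde C$, the block structure of the $M(a)$, the face characterization of $\tilde C$, and the normalization of the compressed pencil — is routine linear algebra and convexity.
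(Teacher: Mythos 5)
Your proof is correct, and it reaches the conclusion by a genuinely different and more self-contained route than the paper. The paper's argument is very short: it invokes the Ramana--Goldman theorem that every face of a spectrahedral cone is \emph{exposed}, picks an exposing functional $\ell\in C^{\vee}$, and then observes that $\tilde C_s^{\min}=\{A\in C_s^{\min}\mid(\ell\otimes\id)(A)=0\}$ for all $s$ (writing $A=\sum_k c_k\otimes P_k$ and using that $\sum_k\ell(c_k)P_k=0$ with nonnegative scalars and positive matrices forces each term to vanish), from which it concludes that a realization of $C^{\min}$ ``restricts'' to one of $\tilde C^{\min}$. You dispense with the citation by constructing the exposing data directly out of the realization: $W=\mathrm{range}(M(a_0))$ plays the role of the face at the matrix level, and the functionals $a\mapsto\langle e,M(a)e\rangle$ for $e\in W^{\perp}$ collectively expose $\tilde C$ (indeed your range argument amounts to showing that $\tilde C$ is exposed by $\ell(a)=\mathrm{tr}(P_{W^{\perp}}M(a))$, so you reprove the relevant case of Ramana--Goldman en route). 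Your version is longer, but it has the virtue of being self-contained and of making explicit two steps that the paper's phrase ``restricts to'' leaves implicit: the compression of the pencil from $\C^r$ to $W$ (needed because $M(a_0)$ is typically singular), and the normalization $\hat M(a_0)^{-1/2}(\cdot)\hat M(a_0)^{-1/2}$ that restores the order-unit condition $\hat M(a_0)\mapsto I_{\dim W}$. Both approaches hinge on the same two facts: that $\tilde C_s^{\min}$ is carved out of $C_s^{\min}$ by a linear condition determined by the exposing data, and that the realization of $C^{\min}$ survives restriction to the corresponding subspace.
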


\begin{proof}
Since $C$ is in particular spectrahedral, $\tilde C$ must be exposed by some $\ell\in C^{\vee}$~\cite{rago}. Then for all matrix levels $s$ and all $A\in\mathbb{M}_s(\C)^d$,
$$
A\in \tilde C_s^{\min}\ \Leftrightarrow\ A\in C_s^{\min} \wedge (\ell\otimes\id)(A)=0.
$$
Thus every finite-dimensional realization of $C^{\min}$ restricts to a finite-dimensional realization of $\tilde C^{\min}$.
\end{proof}

We are now ready to prove the main result of this section:

\begin{theorem}\label{simplex}
For a salient polyhedral cone $C\subseteq\mathbb R^d$, the system $C^{\min}$ is finite-dimensional realizable if and only if $C$ is a simplex. Moreover, $C^{\min}=C^{\max}$ if and only if $C$ is a simplex.
\end{theorem}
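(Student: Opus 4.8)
The plan is to isolate a single hard implication and deduce everything else from it. Since $C^{\min}\subseteq C^{\max}$ always holds and $C^{\max}$ is finite-dimensional realizable for polyhedral $C$ by Theorem~\ref{maxreal}, it suffices to prove:
\begin{center}
$C$ polyhedral and not a simplex $\implies$ $C^{\min}$ is not finite-dimensional realizable.
\end{center}
This is exactly the non-trivial direction of the first statement, and since it forces $C^{\min}\neq C^{\max}$ (as $C^{\max}$ is realizable while $C^{\min}$ is not), also the non-trivial direction of the ``moreover''. For the easy directions: if $C$ is a simplex, a linear isomorphism of $\R^d$ identifies $C$ with the orthant $\R^n_{\geq 0}$, for which one checks directly that $C^{\min}_s=C^{\max}_s=\{(A_1,\dots,A_n):A_i\geq 0 \text{ for all }i\}$ (writing a tuple with all $A_i\geq 0$ as $\sum_i e_i\otimes A_i$), so $C^{\min}=C^{\max}$ and both are realized by diagonal matrices; all three notions are invariant under linear isomorphisms of $\R^d$.

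For the hard implication I would first reduce to a single model case, the cone over the square of Example~\ref{essbound}. Realize $C$ as the cone over a polytope $P$; then $C$ fails to be a simplex iff the vertices of $P$ are affinely dependent, so there is a minimal affinely dependent set $w_1,\dots,w_m$ of vertices (a circuit). Its unique affine dependence has all coefficients nonzero and splits, by Radon, into two parts of size $\geq 2$ (a part of size $1$ would exhibit a vertex as a convex combination of other vertices) whose convex hulls meet in a single point $p$ lying in the relative interior of both. From this one extracts four points $w_1,w_2$ and $w_3,w_4$ (passing, when $m>4$, to a chord through $p$ inside the convex hull of the larger part) spanning a planar convex quadrilateral with intersecting diagonals, and the cone over any such quadrilateral is linearly isomorphic to the square cone: $\mathrm{GL}_3(\R)$ acts transitively on triples of independent rays, and the crossing condition pins down the fourth ray up to positive scaling. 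Via Lemma~\ref{face} (to pass to a face of $P$) and Lemma~\ref{reduce} (to pass to the relevant linear section, whose lifting hypothesis must be verified for polyhedral $C$ by appending the facet inequalities of $C$ as extra diagonal blocks), finite-dimensional realizability of $C^{\min}$ would descend to finite-dimensional realizability of the square cone's minimal system.

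It then remains to show that the square cone's $C^{\min}$ is not $r$-dimensional realizable for any $r$, using the criterion of Theorem~\ref{finrea} together with the explicit description of $\partial^{\mathrm{ess}}$ in Example~\ref{essbound}: a tuple $v_1\otimes A_1+\dots+v_4\otimes A_4$ lies in $\partial^{\mathrm{ess}}C^{\min}_r$ iff there is an invertible $U$ with $\mathrm{im}(UA_1)\perp\mathrm{im}(UA_2)$ and $\mathrm{im}(UA_3)\perp\mathrm{im}(UA_4)$; in particular this fails as soon as two of the four image spaces share a nonzero vector, since one cannot make a nonzero subspace orthogonal to itself. The plan is to exhibit boundary elements $A^{(i)}\in\partial C^{\min}_{s_i}$ whose last matrix component is positive definite (so no nonzero compression $V_i$ annihilates $A^{(i)}$) and which are arranged so that for every choice of nonzero $V_i$ the tuple $\sum_i V_i^*A^{(i)}V_i$ has $\mathrm{im}(V_i^*A^{(i)}_1V_i)\cap\mathrm{im}(V_i^*A^{(i)}_2V_i)\neq 0$; factoring a rank-$k$ compression as $V=BC$ with $B$ injective and $C$ surjective shows that the compressed image spaces of any positive-definite components coincide, which is the mechanism one wants to exploit. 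Such elements cannot be compressed into $\partial^{\mathrm{ess}}C^{\min}_r$, contradicting Theorem~\ref{finrea}.

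The main obstacle is the construction of this test family in the last step. A tuple all of whose components are positive definite is automatically interior (the map $(A_1,\dots,A_4)\mapsto\sum_i v_i\otimes A_i$ is a surjective linear map, so interiors correspond), so the boundary elements must have degenerate components while still being robust under arbitrary compressions; arranging the geometry of these degeneracies so that the overlap of image spaces is forced for every admissible $V_i$ is the crux. A secondary technical point is the verification of the hypothesis of Lemma~\ref{reduce} for the section used in the reduction.
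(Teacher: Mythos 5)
Your high-level plan --- isolate the implication ``polyhedral non-simplex $\Rightarrow$ $C^{\min}$ not finite-dimensional realizable,'' reduce to the cone over a square, and then apply Theorem~\ref{finrea} together with the essential-boundary description of Example~\ref{essbound} --- matches the paper's strategy. But both key technical steps have gaps. The reduction step is the more serious one: Lemma~\ref{face} passes to a \emph{face} of $C$ and Lemma~\ref{reduce} passes to a \emph{linear section} $C\cap H$, and both operations strictly decrease the linear dimension of the cone. Starting from the cone $C\subseteq\R^3$ over a pentagon, say, proper faces are edges and rays and proper linear sections through the interior are $2$-dimensional, so the $3$-dimensional square cone is simply unreachable by these lemmas. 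More generally, the cone over the quadrilateral you extract from a Radon circuit inside $P$ is a \emph{subcone} of $C$ which is in general neither a face nor a section of $C$, so neither lemma applies to it. The paper handles this by a different device: after reducing to dimension $3$ by faces and vertex figures (its Step~3), it does \emph{not} reduce further to the square, but shows (Step~2) that every non-simplex polygon cone is linearly equivalent to some $D$ sandwiched as $C\subseteq D\subseteq C(\alpha)$ for infinitely many $\alpha$, where $C$ is the square cone. The Step-1 test elements lie in $C_2^{\min}\subseteq D_2^{\min}$, the inclusion $D\subseteq C(\alpha)$ supplies the separating functionals certifying they are in $\partial^{\rm ess}D_2^{\min}$, and any compression of them into $\partial^{\rm ess}D_r^{\min}$ would restrict to a compression into $\partial^{\rm ess}C_r^{\min}$, contradicting Step~1.

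The second gap is in the square-cone step itself: you state the goal but do not construct the test family, and the mechanism you sketch is internally inconsistent. If in a decomposition $A=\sum_j v_j\otimes A_j$ of the square cone the two matrices $A_1,A_2$ (whose compressed images are supposed to be forced to overlap) are both positive definite, then $A$ lies in the \emph{interior} of $C_s^{\min}$, not on the boundary: a small perturbation in any $v_j$-direction can be absorbed by adding a small kernel element $(B,B,-B,-B)$ of the parametrization and shrinking $A_1,A_2$ slightly. So ``positive definite components'' and ``boundary element'' pull in opposite directions, and the coincidence-of-images trick cannot be the mechanism. The paper's actual construction uses rank-one projections $A_1,\dots,A_4$ parametrized by an angle $\alpha\in(0,\pi/2)$; choosing $r$ distinct angles and applying Theorem~\ref{finrea} together with Example~\ref{essbound}, any compression $\sum_i V_i^* A^{(i)} V_i$ landing in $\partial^{\rm ess}C_r^{\min}$ would force the $2r$ columns of the $V_i$ to be pairwise orthogonal in $\C^r$, an impossibility. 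That explicit dimension count is the heart of the proof and is not recovered by your positive-definiteness idea.
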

\begin{proof}
One direction is easy. Any simplex cone is isomorphic to the positive orthant $C=\R_{\geq 0}^d$. In this case, one easily checks
$$
C_s^{\min}=\left\{ (A_1,\ldots, A_d)\in {\rm Her}_s(\C)^d \Bigm| A_1\geqslant 0,\ldots, A_d\geqslant 0\right\} = C_s^{\max}.
$$
We prove the other direction in $3$ steps.

{\it Step 1}: We first deal with the cone over the square $C={\rm cc}\{v_1,v_2,v_3,v_4\}\subseteq\mathbb R^3$ as in Example~\ref{essbound}, and show that its smallest system is not finite-dimensional realizable. This first nontrivial case is already the hardest. We will use Theorem \ref{finrea} together with our characterization of the essential boundary from Example~\ref{essbound}. Let
$$
\sigma_z=\left(\begin{array}{cc}1 & 0 \\0 & -1\end{array}\right),\qquad \sigma_x=\left(\begin{array}{cc}0 & 1 \\1 & 0\end{array}\right)
$$
be the Pauli matrices. For $\alpha\in(0,\pi/2)$, consider the rank one projections
\begin{align*}
A_1 &= \tfrac{1}{2}\left(I_2 - \cos(\alpha) \sigma_z + \sin(\alpha) \sigma_x\right), \\
A_2 &= \tfrac{1}{2}\left(I_2 + \cos(\alpha) \sigma_z - \sin(\alpha) \sigma_x\right), \\
A_3 &= \tfrac{1}{2}\left(I_2 + \cos(\alpha) \sigma_z + \sin(\alpha) \sigma_x\right), \\
A_4 &= \tfrac{1}{2}\left(I_2 - \cos(\alpha) \sigma_z - \sin(\alpha) \sigma_x\right),
\end{align*}
where the sign pattern is as in~\eqref{squarevertices}, and the associated element
\begin{equation}\label{el}
A := v_1\otimes A_1+v_2\otimes A_2+v_3\otimes A_3+v_4\otimes A_4\in C_2^{\min},
\end{equation}
still parametrized by $\alpha\in(0,\pi/2)$. For $V\in \mathbb{M}_{r,2}(\C)$ with columns $w_1,w_2$, the property
\begin{equation}\label{orth}
\im(VA_1)\perp {\rm im}(VA_2),\quad \im(VA_3)\perp \im(VA_4)
\end{equation}
is equivalent to $\Vert w_1\Vert=\Vert w_2\Vert\ \mbox{ and }\ w_1\perp w_2$, since $A_1$ and $A_2$ are projections onto orthogonal vectors, and likewise for $A_3$ and $A_4$. By taking $V=I_2$, or any other unitary, we conclude $A\in \partial^{\rm ess} C_2^{\min}$ by Example \ref{essbound}. Now let $A^{(1)},\ldots,A^{(r)}$ each be as in (\ref{el}), but for different angles $0<\alpha_1<\ldots<\alpha_r<\tfrac{\pi}{2}$. If these $A^{(i)}$ admit a compression to $\partial^{\rm ess}C_r^{\min}$ as in Theorem \ref{finrea}, then we obtain $V_i\in\mathbb{M}_{r,2}(\C)$ with
\begin{align*}
\im\left(\sum_{i=1}^r V_i A_1^{(i)} V_i^*\right) \:\perp\: \im\left(\sum_{i=1}^r V_i A_2^{(i)} V_i^*\right), \qquad
\im\left(\sum_{i=1}^r V_i A_3^{(i)} V_i^*\right) \:\perp\: \im\left(\sum_{i=1}^r V_i A_4^{(i)} V_i^*\right), \\
\end{align*}
where now the $U$ of \eqref{orthoims} has been absorbed into the $V_i$. Since each summand is positive, these orthogonality relations require the individual summands to have orthogonal images,
\[
\im(V_i A_1^{(i)} V_i^*)\perp \im(V_j A_2^{(j)} V_j^*),\qquad \im(V_i A_3^{(i)} V_i^*)\perp \im(V_j A_4^{(j)} V_j^*),
\]
for all $i,j=1,\ldots,r$. The $A_k^{(i)}$ have rank one, and hence so do the $V_i A_k^{(i)} V_i^*$. An elementary calculation then shows that the $2r$ columns of all the $V_i$'s must be pairwise orthogonal, which is impossible in a space of dimension $r$. Hence $\sum_i V_i A^{(i)} V_i^*$ cannot be in the essential boundary, and Theorem~\ref{finrea} implies that $C^{\min}$ is not $r$-dimensional realizable. So it is not finite-dimensional realizable. This completes Step $1$.

{\it Step 2}: We now generalize to those cones $D$ that fit in between the cone over a square and a circumscribed ellipse, as in Figure~\ref{variousC}, and will then argue that this actually applies to every salient polyhedral cone in $\R^3$. Again  let $C={\rm cc}\{ v_1,\ldots, v_4\}\subseteq \mathbb R^3$ be as in Step $1$. For $\alpha\in (0,\pi/2)$, consider
$$
	M_1 := \sin(\alpha) \sigma_z,\qquad  M_2 := \cos(\alpha) \sigma_x,\qquad M_3 := I_2.
$$
Then  $C\subseteq C(\alpha):=\left\{ (a,b,c)\in \mathbb R^3 \mid aM_1+bM_2+cM_3\geqslant 0\right\}$.  Figure~\ref{variousC} shows sections in the plane defined by $c=1$ of the cones $C(\alpha)$ for various values of $\alpha$.  Again consider $A\in C_2^{\min}$ as in (\ref{el}). The functional $(X,Y,Z)\mapsto {\rm tr}\left(XM_1+YM_2+ZM_3\right)$ even shows that $A \in \partial^{\rm ess} D_2^{\min}$ for any convex cone $D$ with $C\subseteq D\subseteq C(\alpha)$. Now assume that the inclusions
$$
C\subseteq D\subseteq C(\alpha)
$$
hold for {\it infinitely} many values of $\alpha\in (0,\pi/2)$. Then families of $A^{(i)}$ as above (with different values for $\alpha$) are also in $\partial D_2^{\min}$, but cannot  be compressed into  $\partial^{\rm ess}D_r^{\min}$ as in Theorem \ref{finrea}, since this would then also work for $\partial^{\rm ess} C_r^{\min}$. Hence the operator system $D^{\min}$ is not finite-dimensional realizable. 

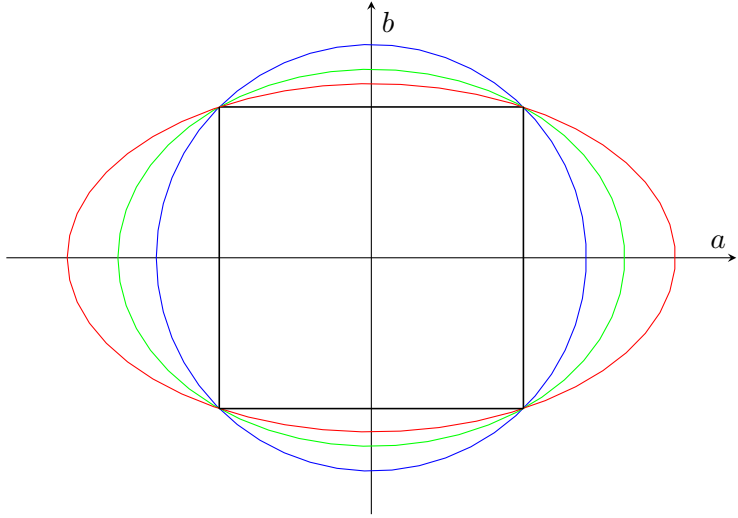
\begin{figure}
\begin{tikzpicture}
\begin{axis}[
    axis x line=middle, 
    axis y line=middle, 
    x=2cm,
    y=2cm,
    xmin=-2.4,
    xmax=2.4,
    ymin=-1.7,
    ymax=1.7,
    ticks=none,
    xlabel={$a$},
    ylabel={$b$},
    domain=-3:3
]
	\addplot[domain=-1:1,semithick] ({x},{1});
	\addplot[domain=-pi:pi,samples=50,blue] ({sqrt(2)*cos(deg(x))},{sqrt(2)*sin(deg(x))});
	\addplot[domain=-pi:pi,samples=50,green] ({5/3*cos(deg(x))},{5/4*sin(deg(x))});
	\addplot[domain=-pi:pi,samples=50,red] ({2*cos(deg(x))},{2/sqrt(3)*sin(deg(x))});
	\addplot[domain=-1:1,semithick] ({x},{-1});
	\addplot[domain=-1:1,semithick] ({1},{x});
	\addplot[domain=-1:1,semithick] ({-1},{x});
\end{axis}
\end{tikzpicture}
\caption{A section of the cone $C$ (square) together with some sections of various $C(\alpha)$.}
\label{variousC}
\end{figure}

To complete Step 2, we show that this applies to \emph{every} salient polyhedral cone $D\subseteq\mathbb{R}^3$. Any quadrilateral in the plane can be transformed by a projective transformation to the square. So in a given planar polytope which is not a simplex, choose vertices $u_1,u_2,w_1,w_2$ that form a quadrilateral, such that both pairs $u_1,u_2$ and $w_1,w_2$ are adjacent vertices. Then transform them to the square, and choose $\alpha'>0$ such that the transformed polytope is contained in $C(\alpha)$ for all $0<\alpha<\alpha'$. This is possible, since the gradient to $\det(aM_1+bM_2+M_3)$ at $(a,b)=(1,1)$ tends to $(1,0)$ for $\alpha\to 0$, and similarly at the other three corners of the square.  This shows that any non-simplex polyhedral cone in $\R^3$ is isomorphic to a cone $D$  with  $C\subseteq D\subseteq C(\alpha)$ for infinitely many values of $\alpha\in (0,\pi/2)$. Its smallest system is thus not finite-dimensional realizable.

{\it Step 3}: We prove the statement in arbitrary dimension $d\geq 4$ by induction on $d$. If $C$ is not a simplex, then either it has a facet that is not a simplex, or a vertex figure that is not a simplex \cite[p.~67]{zieg}. In the first case we apply the contrapositive of Lemma \ref{face}, while in the second case we apply Lemma~\ref{reduce} to a hyperplane defining the vertex figure. The extension required by Lemma \ref{reduce} is possible by taking the conical hull of $\tilde S$ from the vertex (ray). In both cases we reduce to dimension $d-1$.

Finally, the statement about $C^{\min} = C^{\max}$ follows from the previous results.
\end{proof}

\begin{remark}
(i) The argument in Step 2 of the previous proof shows that the smallest system of many non-polyhedral cones is not finite-dimensional realizable either. Any cone in $\R^3$ having a compact section that contains the square and is contained in two different $C(\alpha)$ is an example.

(ii) The results from \cite{ev} provide further evidence that finitely generated operator systems are hardly ever finite-dimensional realizable. 
\end{remark}

\begin{remark}
Every polyhedral cone can be regarded either as the set of positive linear combinations of its finitely many extreme points, or as the set of all points satisfying its finitely many facet inequalities. $C^{\min}$ extends the first picture to matrix levels, since we take matrix positive combinations of points from $C$. On the other hand, $C^{\max}$ generalizes the second picture, since it is defined by the inequalities of $C$. Except for simplices, these two extensions are thus different at matrix level.  
\end{remark}

\begin{example}\label{circle}
There are non-polyhedral cones with a finite-dimensional realizable smallest operator system. One example is the circular cone
$$
C=\left\{ (a,b,c)\in\R^3\mid c\geq  0,\, a^2+b^2\leq c^2\right\}.
$$
It is proven in \cite[Corollary 14.15]{hedi}  and \cite[Theorem 5.4.10]{kelldis} (which relies mostly on \cite[Theorem 7]{choi2}), that the following linear matrix pencil defines the smallest system:
$$
\left(\begin{array}{cc}1 & 0 \\0 & -1\end{array}\right)\otimes x+\left(\begin{array}{cc}0 & 1 \\1 & 0\end{array}\right)\otimes y + \left(\begin{array}{cc}1 & 0 \\0 & 1\end{array}\right)\otimes z.
$$
It is tempting to conjecture that the  following pencil $\mathcal L$ defines the smallest system of the analogous cone over the three-dimensional Euclidean ball in $\R^4$: 
$$
\mathcal L(x,y,w,z) = \left(\begin{array}{cc}1 & 0 \\0 & -1\end{array}\right)\otimes x+\left(\begin{array}{cc}0 & 1 \\1 & 0\end{array}\right)\otimes y + \left(\begin{array}{cc}0 & i \\-i & 0\end{array}\right)\otimes w+\left(\begin{array}{cc}1 & 0 \\0 & 1\end{array}\right)\otimes z ,
$$
which indeed coincides with that cone at the first matrix level. However, this is not true. It is well-known that there are hermitian $4\times 4$-matrices of the block form
$$
X=\left(\begin{array}{cc}A & B \\B^* & C\end{array}\right)
$$
that are positive semidefinite, but cannot be written as $\sum_i P_i\otimes Q_i$ with positive semidefinite matrices $P_i,Q_i$, where all $P_i$ are of size $2$; such matrices are called {\it entangled} in the language of quantum physics. An easy example is the rank one projection
$$
X = \left(\begin{array}{c c|c c}1 & 0 & 0 & 1 \\0 & 0 & 0 & 0 \\\hline 0 & 0 & 0 & 0 \\ 1 & 0 & 0 & 1\end{array}\right).
$$
Now $0\leqslant  2X =\mathcal L\left(A-C,B+B^*,\frac{1}{i}(B-B^*),A+C\right)$. So if the inequality $\mathcal{L}\geqslant 0$ defined the smallest operator system of the cone $C$ over the three-dimensional Euclidean ball, then for every $X$ there would be vectors $v_i\in C$ and positive semidefinite matrices $Q_i$ such that
$$
\left(A-C, B+B^*,\frac{1}{i}(B-B^*),A+C\right)=\sum_i v_i\otimes Q_i.
$$
But then
$$
2X=\sum_i \underbrace{ {} \mathcal L(v_i)}_{\geqslant 0} {} \otimes Q_i,
$$
which contradicts the possibility that $X$ may be entangled.
\end{example}

\section{Inclusion of Spectrahedra}\label{sec_inc}

We explain how our results relate to inclusion testing of spectrahedra.  The inclusion testing problem is the following:

\begin{problem}\label{inprob}
Given $M_1,\ldots, M_d\in {\rm Her}_r(\C)$ and $N_1,\ldots, N_d\in{\rm Her}_t(\C)$ with
$$
\sum_i u_iM_i=I_r,\qquad \sum_i u_iN_i=I_t,
$$
then is it true that
$$
\mathcal S_1(M_1,\ldots,M_d)\subseteq \mathcal S_1(N_1,\ldots, N_d)
$$
holds in $\R^d$?
\end{problem}

Already if $\mathcal S_1(M_1,\ldots,M_d)$ is the cone over a $d$-dimensional cube, this question arises in interesting applications~\cite{bental}. In general, it is a hard algorithmic problem (see \cite{ke3} for an overview and new results). The following strengthening was introduced in~\cite[Section~4.1]{hecp}, generalizing the strengthening of~\cite[Eq.~(7)]{bental} for the matrix cube problem:

\begin{problem}[{\cite{hecp}}]
\label{relprob}
Given $M_1,\ldots, M_d\in {\rm Her}_r(\C)$ and $N_1,\ldots, N_d\in{\rm Her}_t(\C)$ with
$$
\sum_i u_iM_i=I_r,\qquad \sum_i u_iN_i=I_t,
$$
do there exist $V_j\in \mathbb M_{r,t}(\C)$ such that
$$
\sum_j V_j^*M_iV_j=N_i
$$
for all $i$? 
\end{problem}

A positive answer to an instance of Problem \ref{relprob} implies a positive answer to the corresponding instance of Problem \ref{inprob}. Furthermore, Problem \ref{relprob} can be formulated as a semidefinite feasibility problem, and is thus algorithmically tractable.
However, a positive answer to Problem \ref{inprob} does not necessarily imply a positive answer to Problem \ref{relprob}. 
The main result of \cite{hecp}  says that Problem \ref{relprob} is equivalent to $\mathcal S_s(M_1,\ldots,M_d) \subseteq \mathcal S_s(N_1,\ldots,N_d)$ for {\it all} $s\geq 1$, i.e.~to inclusion of the free spectrahedra. This result mostly relies on Choi's characterization of completely positive maps between matrix algebras \cite{choi2}. Since the inclusion  $\mathcal S_1(M_1,\ldots,M_d)\subseteq \mathcal S_1(N_1,\ldots, N_d)$ does not imply the higher inclusions  $\mathcal S_s(M_1,\ldots,M_d) \subseteq \mathcal S_s(N_1,\ldots,N_d)$ in general, Problem \ref{relprob} is a proper strengthening of Problem \ref{inprob}.
There exist quantitative measures for tightness of this strengthening \cite{bental, dav2,hedi, ke2}, which we will explain in more detail below. The first reformulation of our previous result is the following:

\begin{corollary}\label{incmain}
Assume $C=\mathcal S_1(M_1,\ldots,M_d)\subseteq \R^d$ is a salient polyhedral cone. Then a positive answer to Problem \ref{inprob} implies a positive answer to Problem \ref{relprob} for all choices of $N_1,\ldots,N_d\in{\rm Her}_t(\C)$ if and only if $C$ is a simplex.
\end{corollary}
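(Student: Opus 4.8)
The plan is to deduce the corollary from Theorem~\ref{simplex} via the notion of a universal spectrahedral description together with the main result of~\cite{hecp}. First I would reformulate the hypothesis. By the main theorem of~\cite{hecp}, for a fixed pair of pencils a positive answer to Problem~\ref{relprob} is equivalent to the inclusion of free spectrahedra $\mathcal S_s(M_1,\ldots,M_d)\subseteq\mathcal S_s(N_1,\ldots,N_d)$ holding at \emph{all} levels $s\geq 1$. Hence ``a positive answer to Problem~\ref{inprob} implies a positive answer to Problem~\ref{relprob} for all $N_1,\ldots,N_d$'' says exactly that, for every $t$ and all $N_1,\ldots,N_d\in{\rm Her}_t(\mathbb C)$ with $\sum_i u_iN_i=I_t$, the scalar inclusion $\mathcal S_1(M_1,\ldots,M_d)\subseteq\mathcal S_1(N_1,\ldots,N_d)$ forces $\mathcal S_s(M_1,\ldots,M_d)\subseteq\mathcal S_s(N_1,\ldots,N_d)$ for all $s$; that is, $M_1,\ldots,M_d$ (with $\sum_i u_iM_i=I_r$, as required in Problem~\ref{inprob}) is a universal spectrahedral description of $C$. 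I would also record the auxiliary fact that the free spectrahedron $\mathcal S(M_1,\ldots,M_d)$ is itself a finite-dimensional realizable operator system with scalar cone $C$: the $M_i$ are linearly independent, since otherwise a relation $\sum_i c_iM_i=0$ with $c\neq 0$ would put the whole line $a+\mathbb R c$ inside $C$ for each $a\in C$, contradicting salience; and $u$ is an order unit because $\sum_i u_iM_i=I_r$. In particular $C^{\min}\subseteq\mathcal S(M_1,\ldots,M_d)\subseteq C^{\max}$.

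For the ``if'' direction, suppose $C$ is a simplex. Then Theorem~\ref{simplex} gives $C^{\min}=C^{\max}$, so the sandwich above collapses to $\mathcal S(M_1,\ldots,M_d)=C^{\min}$. Thus $M_1,\ldots,M_d$ is a realization of $C^{\min}$, and running the argument from the implication (i)$\Rightarrow$(ii) of Proposition~\ref{univers} shows that it is a universal spectrahedral description of $C$; by the reformulation above, a positive answer to Problem~\ref{inprob} then implies a positive answer to Problem~\ref{relprob} for every $N$.

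For the ``only if'' direction, suppose $C$ is polyhedral but not a simplex. By Theorem~\ref{simplex}, $C^{\min}$ is not finite-dimensional realizable, whereas $\mathcal S(M_1,\ldots,M_d)$ is (namely by $M_1,\ldots,M_d$ themselves), so the containment $C^{\min}\subseteq\mathcal S(M_1,\ldots,M_d)$ must be strict: there are $t$ and $A\in\mathcal S_t(M_1,\ldots,M_d)\setminus C_t^{\min}$. A witnessing $N$ is then produced exactly as in the proof of the implication (ii)$\Rightarrow$(i) of Proposition~\ref{univers}: separate $A$ from the salient closed cone $C_t^{\min}$ by a positive functional that is strictly positive on every $u\otimes vv^*$ with $v\neq 0$ (possible since $C_t^{\min}$ has nonempty interior), and feed it into Lemma~\ref{effwisep} to obtain $N_1,\ldots,N_d\in{\rm Her}_t(\mathbb C)$ with $\sum_i u_iN_i=I_t$, $C\subseteq\mathcal S_1(N_1,\ldots,N_d)$ and $A\notin\mathcal S_t(N_1,\ldots,N_d)$. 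Then Problem~\ref{inprob} has a positive answer for this $N$ while $\mathcal S_t(M_1,\ldots,M_d)\not\subseteq\mathcal S_t(N_1,\ldots,N_d)$, so by~\cite{hecp} Problem~\ref{relprob} has a negative answer, as needed. I expect no serious obstacle here: all the real content already sits in Theorem~\ref{simplex} and Proposition~\ref{univers}, and what remains is the bookkeeping of the translation through~\cite{hecp} and the routine verification that $\mathcal S(M_1,\ldots,M_d)$ is a genuine operator system, so that it is trapped between $C^{\min}$ and $C^{\max}$.
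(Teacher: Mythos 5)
Your proof is correct and follows the same route as the paper: translate the statement (via \cite{hecp}) into whether $M_1,\ldots,M_d$ form a universal spectrahedral description of $C$, then invoke Proposition~\ref{univers} and Theorem~\ref{simplex}. You are in fact a bit more careful than the paper's one-line proof, since you explicitly address the point that Proposition~\ref{univers} only says $C$ \emph{admits} a universal description, and you close that gap by noting that for a simplex the sandwich $C^{\min}\subseteq\mathcal S(M_1,\ldots,M_d)\subseteq C^{\max}$ collapses, so that \emph{any} description is a realization of $C^{\min}$ and hence universal.
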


Of course, even when $C$ is not a simplex, the strengthening may still give the correct answer for a \emph{particular} choice of $N_1,\ldots,N_d$.

\begin{proof}
Problems \ref{inprob} and \ref{relprob} are equivalent for all $N_1,\ldots,N_d$ if and only if the $M_i$ provide a universal spectrahedral description of $C$. So the result follows from Proposition \ref{univers} and Theorem \ref{simplex}.
\end{proof}

\begin{remark}
(i) Although the tightness of the strengthening for simplex cones is easy to prove, it seems like it has not been observed in the literature so far. We will use it below to easily derive error bounds for the non-tight case.

(ii) Corollary \ref{incmain} holds for {\it any description} of $C$ by matrices $M_i$.  So far, only fixed descriptions have been used to deduce error bounds and non-tightness results in~\cite{bental,dav2,hedi,ke2}. It was not clear a priori whether choosing a better spectrahedral description of the cones could result in tightness.  We now know that this is impossible for non-simplex polyhedral cones.

(iii) Finally, we are not aware of any implications of our results on the tightness of the higher levels of the semidefinite hierarchy of~\cite{ke2}.
\end{remark}

Using our approach, parts of~\cite[Theorem 4.8]{ke2} become easy to prove. The result implies that for inclusion of spectrahedra in polyhedra, the strengthening is always tight.

\begin{proposition}
\label{incinpoly}
If $N_1,\ldots,N_d$ commute and $C \subseteq \mathcal S_1(N_1,\ldots, N_d)$, then
$$
C^{\max}\subseteq \mathcal S(N_1,\ldots,N_d).
$$
\end{proposition}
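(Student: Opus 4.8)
The plan is to use the fact that when $N_1,\ldots,N_d$ commute, the spectrahedron $\mathcal S(N_1,\ldots,N_d)$ is essentially a polyhedral operator system, and then to show directly that $C^{\max}$ sits inside it. First I would simultaneously diagonalize: since the $N_i\in\mathrm{Her}_t(\C)$ pairwise commute, there is a unitary $U$ so that $U^*N_iU=\mathrm{diag}(\lambda^{(i)}_1,\ldots,\lambda^{(i)}_t)$ for all $i$. Writing $\ell_k(a):=\sum_i\lambda^{(i)}_k a_i$ for $k=1,\ldots,t$, one gets
\[
\mathcal S_1(N_1,\ldots,N_d)=\left\{a\in\R^d\ \middle|\ \ell_k(a)\geq 0\ \text{ for }k=1,\ldots,t\right\},
\]
a polyhedral cone, and at matrix level
\[
\mathcal S_s(N_1,\ldots,N_d)=\left\{A\in\mathrm{Her}_s(\C)^d\ \middle|\ (\ell_k\otimes\id)(A)\geqslant 0\ \text{ for }k=1,\ldots,t\right\}.
\]

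Next I would invoke the hypothesis $C\subseteq\mathcal S_1(N_1,\ldots,N_d)$, which just says $\ell_k(c)\geq 0$ for every $c\in C$ and every $k$; in other words each $\ell_k$ lies in the dual cone $C^\vee$. Now take any $A=(A_1,\ldots,A_d)\in C^{\max}_s$. By definition of the largest system, $v^*Av=(v^*A_1v,\ldots,v^*A_dv)\in C$ for every $v\in\C^s$. Applying $\ell_k\in C^\vee$ gives $\ell_k(v^*Av)=v^*\big((\ell_k\otimes\id)(A)\big)v\geq 0$ for all $v$, hence $(\ell_k\otimes\id)(A)\geqslant 0$. Since this holds for every $k$, the description of $\mathcal S_s$ above yields $A\in\mathcal S_s(N_1,\ldots,N_d)$. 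As $s$ and $A$ were arbitrary, $C^{\max}\subseteq\mathcal S(N_1,\ldots,N_d)$.

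The argument is essentially a one-line observation once the commuting matrices are diagonalized, so there is no real obstacle; the only thing to be slightly careful about is the bookkeeping in the identity $\ell_k(v^*Av)=v^*\big((\ell_k\otimes\id)(A)\big)v$, which is just linearity of $\ell_k$ together with the definition of $\ell_k\otimes\id$ acting on a tuple of matrices. One could also phrase the whole thing representation-theoretically: the diagonal matrices $\mathrm{diag}(\lambda^{(1)}_k,\ldots,\lambda^{(d)}_k)$ give a finite-dimensional realization of $C^{\max}$ restricted to the polyhedral cone $\bigcap_k\{\ell_k\geq 0\}$ — exactly as in the easy direction of Theorem~\ref{maxreal} — and then monotonicity of $C\mapsto C^{\max}$ finishes it, since $C\subseteq\bigcap_k\{\ell_k\geq 0\}$ forces $C^{\max}\subseteq\big(\bigcap_k\{\ell_k\geq 0\}\big)^{\max}=\mathcal S(N_1,\ldots,N_d)$. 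Either presentation works; I would pick whichever is shortest in context.
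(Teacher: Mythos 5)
Your proof is correct and matches the paper's argument: the paper observes that $P=\mathcal S_1(N_1,\ldots,N_d)$ is polyhedral because the $N_i$ commute, that $P^{\max}=\mathcal S(N_1,\ldots,N_d)$ by the easy direction of Theorem~\ref{maxreal}, and that $C\subseteq P$ gives $C^{\max}\subseteq P^{\max}$ by monotonicity. Your first paragraph spells out the same computation directly, and your closing ``representation-theoretic'' reformulation is essentially the paper's proof verbatim (modulo a small indexing slip, where the $i$-th diagonalized matrix should read $\mathrm{diag}(\lambda^{(i)}_1,\ldots,\lambda^{(i)}_t)$).
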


\begin{proof}
$P=\mathcal{S}_1(N_1,\ldots, N_d)$ is polyhedral, and thus $P^{\max}=\mathcal{S}(N_1,\ldots, N_d)$ by the easy direction in the proof of Theorem \ref{maxreal}. The claim now follows from $C^{\max}\subseteq P^{\max}$.
\end{proof}

\newcommand{\scaledcone}[2]{#2\!\uparrow\! #1} 

As explained above, a positive answer to the strengthened Problem~\ref{relprob} implies a positive answer to the original Problem~\ref{inprob}. In the situation of Corollary~\ref{incmain} or Proposition~\ref{incinpoly}, also a negative answer to Problem~\ref{relprob} implies the same for Problem~\ref{inprob}, although such an inference is not valid in general. One way to approach this issue is to use an entire hierarchy of semidefinite programs that converge to Problem~\ref{inprob}~\cite{ke2}. Another one is to modify the formulation of Problem~\ref{relprob} so as to make the implication work; concretely, we can replace the $M_i$ in Problem~\ref{relprob} by a ``scaled down'' version which is small enough for the implication of negative answers to be valid.

For a salient convex cone $C$, the definition of scaling is as follows. Choose an arbitrary hyperplane $H$ that intersects $C$ only at the origin. For a scaling factor $\nu > 0$, the scaled cone $\scaledcone{C}{\nu}$ is constructed by taking the intersection of $C$ with the affine hyperplane $u+H$, scaling this intersection by the factor $\nu$ from the point $u$, and taking the conical hull with the origin again, resulting in $\scaledcone{C}{\nu}$. In general, $\scaledcone{C}{\nu}$ depends on the choices of $H$ and $u$.

While $C^{\min}\subseteq C^{\max}$ holds trivially, we can now ask by how much $C^{\min}$ and $C^{\max}$ differ, namely by investigating how small the scaling factor $\nu>0$ needs to be in order for the reverse inclusion to hold,
\begin{equation}
\label{coneinc}
(\scaledcone{C}{\nu})^{\max}\subseteq C^{\min}.
\end{equation}
We now derive some results on this and then get back to the relation between Problems~\ref{inprob} and~\ref{relprob}.

\begin{proposition}\label{scale}
Let $C$ be a closed salient cone. Then for any choice of $H$ and $u$ there is some $\nu>0$ such that~\eqref{coneinc} holds.
\end{proposition}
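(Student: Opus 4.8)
The strategy is to reduce to the simplex case by \emph{sandwiching} $\scaledcone{C}{\nu}$, for small $\nu$, between a simplicial subcone of $C$ and $C$ itself; the inclusion~\eqref{coneinc} then drops out of two monotonicity properties together with the fact (the easy half of Theorem~\ref{simplex}) that a simplicial cone $\Delta$ satisfies $\Delta^{\min}=\Delta^{\max}$. To fix notation, let $\ell\colon\R^d\to\R$ be the linear functional with $\ker\ell=H$ and $\ell(u)=1$. Since $C$ is salient and meets $H$ only in $0$, one sees that $\ell>0$ on $C\setminus\{0\}$ (otherwise convexity produces a pair $\pm p\in C$), so the base $B:=C\cap\{\ell=1\}$ is a compact convex body with $u$ in its relative interior, and by construction $\scaledcone{C}{\nu}={\rm cc}\bigl(u+\nu(B-u)\bigr)$.

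Next I would build a suitable simplicial cone. As $u\in{\rm int}(C)$, a closed ball $\overline{B(u,\rho_0)}$ lies in $C$. Picking $g_1,\dots,g_{d-1}$ so that $\{u,g_1,\dots,g_{d-1}\}$ is a basis of $\R^d$, setting $g_d:=-(g_1+\cdots+g_{d-1})$ and $r_i:=u+\eta g_i$ for a small $\eta>0$ with all $r_i\in\overline{B(u,\rho_0)}$, a one-line computation shows the $r_i$ are linearly independent and $u=\tfrac1d\sum_i r_i$. Hence $\Delta:={\rm cc}\{r_1,\dots,r_d\}$ is a simplicial cone with $\Delta\subseteq C$ and $u\in{\rm int}(\Delta)$, and its base $T:=\Delta\cap\{\ell=1\}$ is a $(d-1)$-simplex with $u$ in its relative interior. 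Using compactness of $B$ to get $R>0$ with $B\subseteq\overline{B(u,R)}$ and using $u\in{\rm int}(T)$ to get $\rho>0$ with $\overline{B(u,\rho)}\cap\{\ell=1\}\subseteq T$, I obtain for every $\nu$ with $0<\nu\le\rho/R$ that $u+\nu(B-u)\subseteq T$ (the displacement from $u$ has norm $\le\nu R\le\rho$ and stays in $\{\ell=1\}$), hence $\scaledcone{C}{\nu}={\rm cc}(u+\nu(B-u))\subseteq{\rm cc}(T)=\Delta$.

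Finally, a linear isomorphism of $\R^d$ carrying $\R_{\ge0}^d$ onto $\Delta$ induces maps $g\otimes{\rm id}_s$ that identify $(\R_{\ge0}^d)^{\min}$ with $\Delta^{\min}$ and $(\R_{\ge0}^d)^{\max}$ with $\Delta^{\max}$, so from $(\R_{\ge0}^d)^{\min}=(\R_{\ge0}^d)^{\max}$ (the easy direction in the proof of Theorem~\ref{simplex}) we get $\Delta^{\min}=\Delta^{\max}$. Then, for $\nu\le\rho/R$,
\[
(\scaledcone{C}{\nu})^{\max}\ \subseteq\ \Delta^{\max}\ =\ \Delta^{\min}\ \subseteq\ C^{\min},
\]
the first inclusion because $D\mapsto D^{\max}_s=\{A: v^*Av\in D\ \forall v\}$ is monotone in $D$ and the last because $D\mapsto D^{\min}_s=\{\sum_j c_j\otimes P_j: c_j\in D,\ P_j\geqslant0\}$ is monotone in $D$; this is precisely~\eqref{coneinc}.

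I do not expect a genuine obstacle here: the whole argument is a reduction to simplices, and Theorem~\ref{finrea} plays no role. The only points that need (routine) care are that $B=C\cap(u+H)$ really is compact and full-dimensional inside $u+H$---so that the inscribed simplex $T$ through $u$ exists and $\rho/R>0$---and the bookkeeping that the scaling defining $\scaledcone{C}{\nu}$ is taken with respect to $u$ and commutes correctly with the passage between a cone and its base.
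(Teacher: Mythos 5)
Your proof is correct and follows essentially the same route as the paper: choose a simplex cone $\Delta$ with $\scaledcone{C}{\nu}\subseteq\Delta\subseteq C$ for sufficiently small $\nu>0$, then combine monotonicity of $D\mapsto D^{\max}$ and $D\mapsto D^{\min}$ with the identity $\Delta^{\max}=\Delta^{\min}$ from the easy direction of Theorem~\ref{simplex}. The paper treats the existence of the sandwiching simplex cone as immediate, while you supply the explicit construction (base $B$, inscribed simplex $T$, and the $\rho/R$ scaling bound); that is additional detail, not a different argument.
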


\begin{proof}
After choosing $H$ and $u$, choose $\nu>0$ and a simplex cone $S$ with $$\scaledcone{C}{\nu} \subseteq S\subseteq C.$$ We then have
\[
(\scaledcone{C}{\nu})^{\max} \subseteq S^{\max}=S^{\min} \subseteq C^{\min}. \qedhere
\]
\end{proof}

For any inclusion of cones $C\subseteq D$, we thus also have $(\scaledcone{C}{\nu})^{\max}\subseteq D^{\min}$. By suitable choice of $u$, we can also find a uniform bound on $\nu$ that only depends on the dimension:

\begin{theorem}
Let $C\subseteq \R^d$ be a closed salient cone. Then for any choice of $H$, there is an order unit $u\in C$ such that the inclusion $(\scaledcone{C}{\nu})^{\max}\subseteq C^{\min}$ holds with $\nu=1/(d+1)$.
\end{theorem}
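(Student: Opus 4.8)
The plan is to exploit the inclusion chain already used in Proposition~\ref{scale}, but to make the choice of the order unit $u$ and the scaling factor explicit, using the classical fact that any bounded convex body contains a simplex obtained from its own vertices, scaled by a dimension-dependent factor about a suitable interior point. Concretely, fix $H$, and let $K$ be the intersection of $C$ with some affine hyperplane transverse to all rays of $C$; $K$ is a compact convex body of dimension $d-1$ (sitting inside $\R^d$). I would like to find a point $u$ in the relative interior of $K$ and a simplex $T\subseteq K$ with vertices among extreme points of $K$, such that the dilation of $K$ by $1/(d+1)$ about $u$ lands inside $T$. Taking conical hulls from the origin then yields $\scaledcone{C}{\,1/(d+1)} \subseteq S \subseteq C$ for the simplex cone $S = \mathrm{cc}(T)$, and the computation
\[
(\scaledcone{C}{\,1/(d+1)})^{\max} \subseteq S^{\max} = S^{\min} \subseteq C^{\min}
\]
from Proposition~\ref{scale} finishes the argument, since $S^{\max}=S^{\min}$ by Theorem~\ref{simplex}.

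The geometric heart of the matter is therefore a statement purely about the $(d-1)$-dimensional convex body $K$: there is a point $u\in\mathrm{relint}(K)$ and a simplex $T$ spanned by $d$ of the extreme points of $K$ such that $u + \tfrac{1}{d+1}(K-u) \subseteq T$. The natural candidate for $u$ is a centerpoint-type point of $K$, and the natural candidate for $T$ comes from picking, via Carathéodory applied to $u$, an affinely independent set of extreme points $p_0,\dots,p_{d-1}$ whose convex hull contains $u$. One then checks that the barycentric coordinates of $u$ with respect to $T$ can be taken bounded below, and that every point of $K$, written relative to $u$, cannot escape $T$ after shrinking toward $u$ by the reciprocal of the ambient-plus-one dimension. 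This is exactly the kind of bound that appears in classical results on inscribed simplices (e.g. the fact that any convex body contains a simplex with at least $(d+1)^{-d}$ of its volume, or the ``$1/d$'' Steinitz/centerpoint-type estimates); the clean constant $1/(d+1)$ strongly suggests using the following elementary lemma: if $u$ lies in the convex hull of $p_0,\dots,p_{d-1}\in K$ with all barycentric weights positive, then shrinking $K$ about $u$ by $1/(\text{number of vertices})$ stays within the simplex provided $u$ is chosen as a point for which no hyperplane through $u$ has more than a controlled fraction of $K$ on one side.

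I expect the main obstacle to be precisely this combinatorial-geometric lemma: controlling, uniformly in the shape of $K$, how small the dilation factor must be so that $u+\tfrac{1}{d+1}(K-u)$ is swallowed by a simplex on extreme points of $K$, and verifying that $1/(d+1)$ (rather than some worse constant) actually works. The cleanest route is probably to reduce to the worst case where $K$ is itself a simplex and $u$ its barycenter: then the barycenter has all barycentric coordinates equal to $1/d$ with respect to the $d$ vertices, the antipodal dilation of $K$ about $u$ by factor $-1/(d)$ is again inside $K$, and a short computation with barycentric coordinates shows that the factor $+1/(d+1)$ (or even $1/d$) suffices; the general case then follows because any $K$ contains a simplex on its extreme points and one transports the estimate. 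Once the lemma is in hand, the rest is the three-line inclusion chain above, so the proof is short modulo this one geometric estimate.
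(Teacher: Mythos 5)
You correctly identify the overall reduction, which matches the paper: choose a simplex cone $S$ with $\scaledcone{C}{\nu}\subseteq S\subseteq C$ and apply the chain $(\scaledcone{C}{\nu})^{\max}\subseteq S^{\max}=S^{\min}\subseteq C^{\min}$ from Proposition~\ref{scale}. You also correctly pinpoint the missing ingredient: a geometric estimate guaranteeing that for a suitable choice of $u$ and a suitable inscribed simplex $T$ in the $(d-1)$-dimensional section $K$, the dilation $u+\tfrac{1}{d+1}(K-u)$ is contained in $T$. But the sketch you offer for that estimate does not close the gap.

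The proposed reduction ``to the worst case where $K$ is itself a simplex and $u$ its barycenter'' is backwards: a simplex is the \emph{easiest} case, not the worst. If $K$ is a simplex then $T=K$ works and any dilation factor $\le 1$ is fine, so verifying the estimate there gives no information about a general $K$ (e.g.\ a ball, where a genuine bound is actually needed). There is also no ``transport'' argument from an arbitrary inscribed simplex to general $K$: if $T$ is small relative to $K$, then $u+\tfrac{1}{d+1}(K-u)$ will escape $T$ regardless of how $u$ is chosen inside $T$. The paper instead invokes the main theorem of Lassak~\cite{las}: for a convex body in $\R^{d-1}$, the homothety with ratio $1/(d+1)$ centered at the barycenter of a \emph{maximal-volume} inscribed simplex maps the body into that simplex. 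This is a nontrivial result; the pair (maximal-volume simplex, its barycenter) is essential and neither a centerpoint nor a Carathéodory-type choice of simplex through a fixed $u$ is known to give the constant $1/(d+1)$. So the framing is right, but the core lemma is asserted rather than proved, and the suggested route to prove it would fail.
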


\begin{proof}
In the proof of Proposition \ref{scale}, we apply the main theorem of \cite{las}: whenever one inscribes into a convex body in $\R^{d-1}$ a simplex of maximal volume, then scaling the body with ratio $1/(d+1)$ from the barycenter of the simplex will make it contained in the simplex.
\end{proof}

We can also recover the factor of inverse dimension from \cite{dav2} in the presence of symmetry. Since we talk about cones as opposed to compact convex bodies, this dimension is our $d-1$:

\begin{theorem}\label{symm}
Let $C\subseteq \R^d$ be a closed salient cone, and assume $C\cap (u+H)$ is symmetric with respect to $u$. Then the inclusion~\eqref{coneinc} holds with $\nu=1/(d-1)$.
\end{theorem}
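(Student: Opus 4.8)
The plan is to follow the reduction used for Proposition~\ref{scale}, but to replace the simplex-sandwiching step — which is no longer possible in general; already for the square cone there is no triangle squeezed between $\scaledcone{C}{1/2}$ and $C$ — by an explicit decomposition at matrix level coming from John's theorem. After a linear change of coordinates we may assume $u+H=\{x\in\R^d : x_d=1\}$ and $u=(0,\dots,0,1)$. As $C$ is salient with $u$ an interior point, $C$ is then the cone over a convex body $K_0\subseteq\R^{d-1}$ (bounded, full-dimensional), the hypothesis says $K_0$ is centrally symmetric about the origin, and $\scaledcone{C}{\nu}$ with $\nu=1/(d-1)$ is the cone over $\tfrac1{d-1}K_0$. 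Write $n:=d-1$ and, for $a\in\R^n$ and $A=(A_1,\dots,A_n)$, put $\langle a,A\rangle:=\sum_{j=1}^n a_jA_j$. We must show that every $(A,c)\in(\scaledcone{C}{\nu})^{\max}_s$ lies in $C^{\min}_s$.

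The tool is John's theorem for $K_0$: after a further linear transformation we may take the maximal-volume inscribed ellipsoid of $K_0$ to be the Euclidean unit ball, which produces contact points $u_1,\dots,u_m\in\partial K_0$ with $\vert u_i\vert=1$ and weights $c_i>0$ such that
\[
\sum_i c_i u_i = 0,\qquad \sum_i c_i\, u_iu_i^{\mathsf{T}} = I_n,\qquad\text{so}\qquad \sum_i c_i = n .
\]
Since the ball lies in $K_0$, its supporting hyperplane at $u_i$ also supports $K_0$, so the support function satisfies $h_{K_0}(u_i)=1$; by central symmetry, $h_{K_0}(-u_i)=1$ as well.

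Now unwind the definition: $(A,c)\in(\scaledcone{C}{\nu})^{\max}_s$ means exactly that $v^*\langle a,A\rangle v \leqslant \tfrac1n h_{K_0}(a)\, v^*cv$ for all vectors $v$ and all $a\in\R^n$. Applying this with $a=\pm u_i$ and using $h_{K_0}(\pm u_i)=1$ gives $-\tfrac1n c \leqslant \langle u_i,A\rangle \leqslant \tfrac1n c$ in the Loewner order. I would then set
\[
P_i := \frac{c_i}{n}\bigl(c + n\,\langle u_i,A\rangle\bigr),\qquad i=1,\dots,m .
\]
Here $P_i\geqslant 0$ because $n\langle u_i,A\rangle\geqslant -c$; $\sum_i P_i = \tfrac1n(\sum_i c_i)\,c + \sum_i c_i\langle u_i,A\rangle = c$ because $\sum_i c_i=n$ and $\sum_i c_iu_i=0$; and for each $k$, $\sum_i (u_i)_k P_i = \sum_j A_j\sum_i c_i(u_i)_k(u_i)_j = A_k$ because $\sum_i c_iu_iu_i^{\mathsf{T}}=I_n$. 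Hence $(A,c)=\sum_i (u_i,1)\otimes P_i$ with each $(u_i,1)\in C$, so $(A,c)\in C^{\min}_s$, which proves $(\scaledcone{C}{\nu})^{\max}\subseteq C^{\min}$ with $\nu=1/(d-1)$.

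The main obstacle, and the reason central symmetry is needed, is the positivity $P_i\geqslant 0$: it rests on the \emph{two-sided} estimate $-\tfrac1n c\leqslant\langle u_i,A\rangle\leqslant\tfrac1n c$, whereas membership in $(\scaledcone{C}{\nu})^{\max}$ gives directly only the one-sided bound $\langle u_i,A\rangle\leqslant\tfrac1n h_{K_0}(u_i)\,c=\tfrac1n c$; the matching lower bound uses $h_{K_0}(-u_i)=h_{K_0}(u_i)$, which fails for non-symmetric $K_0$ (and without it only the weaker factor $1/(d+1)$ of the previous theorem survives). The remaining ingredients — that $C$ really is the cone over a convex body, and the standard form of John's decomposition of the identity — are routine.
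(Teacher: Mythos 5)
Your proof is correct, and it takes a genuinely different route from the paper's. The paper's argument uses Gr\"unbaum's result on the maximal-volume inscribed simplex in a centrally symmetric body: starting from such a simplex $S$ with barycenter $b$, it establishes $\tilde C\subseteq -(d-1)(S-b)+b$ by a supporting-hyperplane argument, concludes $\tfrac1{d-1}\tilde C+\tfrac d{d-1}b\subseteq S$, pushes this through the smallest/largest systems via the simplex case (as in Proposition~\ref{scale}), and then averages the resulting containment with its reflection through $u$ to cancel the translation by $b$. Your argument instead invokes John's theorem for the maximal-volume inscribed \emph{ellipsoid} and uses the full decomposition of the identity $\sum_i c_i u_iu_i^{\mathsf T}=I_n$, $\sum_i c_iu_i=0$ to write $(A,c)$ directly as a matrix-positive combination $\sum_i(u_i,1)\otimes P_i$ of the contact points, which are in $C$. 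This bypasses the simplex sandwich of Proposition~\ref{scale} entirely and gives an explicit certificate of membership in $C^{\min}_s$, at the cost of needing the slightly heavier John machinery. Both arguments use symmetry at the same conceptual spot: the paper uses it to be able to average away the barycenter shift $b$, while you use it (via $h_{K_0}(-u_i)=h_{K_0}(u_i)$) to get the two-sided Loewner bound $-\tfrac1n c\leqslant\langle u_i,A\rangle\leqslant\tfrac1n c$ that makes each $P_i$ positive. One implicit step you rely on and could make explicit: the linear map sending the John ellipsoid to the unit ball fixes the origin, because by central symmetry of $K_0$ and uniqueness of the John ellipsoid that ellipsoid is centered at the origin; this is what guarantees the transformed body is still symmetric about $0$.
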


\begin{proof}
As in the previous proof, one can use the simplex of maximal volume contained in the centrally symmetric convex body $C\cap (u+H)$. Gr\"unbaum showed that the scaling factor can then be taken equal to the dimension~~\cite[p.~259]{gbaum}. But since his method would not necessarily yield the center of symmetry $u$ as the center of scaling, we argue slightly differently.

For notational simplicity, we assume $H=\R^{d-1}\times\{0\}\cong \R^{d-1}$ and $u=(0,\dots,0,1)$, which we take to be the origin of $u+H$ as identified with $\R^{d-1}$. Set $\tilde C:=C\cap (u+H)\subseteq\R^{d-1}$ and let $S\subseteq \tilde C$ be a simplex of maximal volume. Let $b$ denote the barycenter of $S$. If $F$ is a face of $S$ and $v$ its opposite vertex, then the translate of $F$ through $v$, which is $F + \tfrac{d}{d-1}(v-b)$, is a hyperplane supporting $\tilde C$, since otherwise we could increase the volume of the simplex. The simplex defined by all these translates is $-(d-1)(S-b)+b$, and therefore $\tilde C\subseteq -(d-1)(S-b) + b$. Symmetry of $\tilde C$ then implies
$$
\frac{1}{d-1}\tilde C +\frac{d}{d-1}b\subseteq S.
$$
Now assume $(A_1,\ldots, A_d)\in C_s^{\max}$. Then
$$
\left(\frac{1}{d-1}A_1+\frac{d}{d-1}b_1A_d,\ldots, \frac{1}{d-1}A_{d-1}+\frac{d}{d-1}b_{d-1}A_d,A_d\right)\in C_s^{\min},
$$
by the argument used for Proposition \ref{scale}. By symmetry we get the same result with $-b$ instead of $b$, and after adding and dividing by $2$, we arrive at the desired conclusion,
\[
\left(\frac{1}{d-1}A_1,\ldots, \frac{1}{d-1}A_{d-1},A_d\right)\in C_s^{\min}.\qedhere
\]
\end{proof}

\begin{remark}
Now given an instance of Problem~\ref{inprob}, choose $\nu > 0$ such that~\eqref{coneinc} holds for $C = \mathcal{S}_1(M_1,\ldots,M_d)$. For convenience of notation, chose coordinates such that $u = (0,\ldots,0,1)$ and $H = \mathbb{R}^{d-1}\times \{0\}$ as in the previous proof. Then
\[
\scaledcone{C}{\nu} = \mathcal{S}_1(\nu^{-1} M_1,\ldots,\nu^{-1}M_{d-1},M_d).
\]
Now property~\eqref{inprob} guarantees that if Problem~\ref{relprob} with
\[
M'_i := \begin{cases} \nu^{-1} M_i & \textrm{for } i < d, \\ M_d & \textrm{for } i = d. \end{cases}
\]
in place of the $M_i$ has a negative solution, then so does the original Problem~\ref{inprob}. So with this modification, Problem~\ref{relprob} is a relaxation rather than a strengthening of Problem~\ref{inprob}. Intuitively speaking, the closer the scaling factor $\nu$ is to $1$, the smaller the gap between Problem~\ref{inprob} and Problem~\ref{relprob} will be.

In other applications, such as the matrix cube problem~\cite{bental}, one is directly interested in the largest $\nu$ for which an inclusion of the form $\scaledcone{\mathcal{S}_1(M_1,\ldots,M_d)}{\nu}\subseteq \mathcal{S}_1(N_1,\ldots,N_d)$ holds. In this situation, semidefinite strengthening as in Problem~\ref{relprob} provides a lower bound on the optimal $\nu$. In the case of the matrix cube problem, Theorem~\ref{symm} applies, and we may conclude that the strengthening of Problem~\ref{relprob} differs by a factor of at most $d-1$ from the actual optimal value. This bound is neither dominated by nor dominating over the error bound of Ben-Tal and Nemirovski~\cite{bental}.
\end{remark}

\section*{Acknowledgments}
We thank Kai Kellner and Markus Schweighofer for interesting discussions, and Bill Helton, Igor Klep, Scott McCullough and two anonymous referees for helpful feedback on ealier versions.

The second author was supported by Grant No.\ P 29496-N35 of the Austrian Science Fund (FWF). 
The third author was supported by ERC Starting Grant No.\ 277728 and ERC Consolidator Grant No.\ 681207.

\begin{bibdiv}
\begin{biblist}

\bib{arg}{article}{
AUTHOR={Argerami, M.},
AUTHOR={Coskey, S.},
 AUTHOR={Kalantar, M.},
 AUTHOR={Kennedy, M.},
 AUTHOR={Lupini, M.},
 AUHTOR={Sabok, M.},
 TITLE={The classification problem for finitely generated operator systems and spaces},
 JOURNAL={preprint},
note = {\href{http://arxiv.org/abs/1411.0512}{arXiv:1411.0512}},
 }

\bib{arv}{article}{
    AUTHOR = {Arveson, W. B.},
     TITLE = {Subalgebras of {$C\sp{\ast} $}-algebras},
   JOURNAL = {Acta Math.},
  FJOURNAL = {Acta Mathematica},
    VOLUME = {123},
      YEAR = {1969},
     PAGES = {141--224},
  }

\bib{bental}{article}{
    AUTHOR = {Ben-Tal, A.},
    AUTHOR={Nemirovski, A.},
     TITLE = {On tractable approximations of uncertain linear matrix
              inequalities affected by interval uncertainty},
   JOURNAL = {SIAM J. Optim.},
  FJOURNAL = {SIAM Journal on Optimization},
    VOLUME = {12},
      YEAR = {2002},
    NUMBER = {3},
     PAGES = {811--833 (electronic)},
}

\bib{blek}{book}{
     TITLE = {Semidefinite optimization and convex algebraic geometry},
    SERIES = {MOS-SIAM Series on Optimization},
    VOLUME = {13},
    EDITOR = {Blekherman, G.},
    EDITOR={Parrilo, P. A.},
    EDITOR={Thomas, R. R.},
 PUBLISHER = {Society for Industrial and Applied Mathematics (SIAM),
              Philadelphia, PA; Mathematical Optimization Society,
              Philadelphia, PA},
      YEAR = {2013},
     PAGES = {xx+476},
}

\bib{choi2}{article} {
    AUTHOR = {Choi, M. D.},
     TITLE = {Completely positive linear maps on complex matrices},
   JOURNAL = {Linear Algebra and Appl.},
    VOLUME = {10},
      YEAR = {1975},
     PAGES = {285--290},

}

\bib{choi}{article}{
    AUTHOR = {Choi, M. D.},
    AUTHOR={Effros, E. G.},
     TITLE = {Injectivity and operator spaces},
   JOURNAL = {J. Functional Analysis},
    VOLUME = {24},
      YEAR = {1977},
    NUMBER = {2},
     PAGES = {156--209},
}

\bib{dav2}{article}{
AUTHOR={Davidson, K. R.},
AUTHOR={Dor-On, A.},
AUTHOR={Shalit, O. M.},
AUTHOR={Solel, B.},
TITLE={Dilations, inclusions of matrix convex sets, and completely positive maps},
JOURNAL={preprint},
note = {\href{https://arxiv.org/abs/1601.07993}{arXiv:1601.07993}},
}

\bib{dav}{article}{
    AUTHOR = {Davidson, K.},
    AUTHOR={Kennedy, M.},
     TITLE = {The {C}hoquet boundary of an operator system},
   JOURNAL = {Duke Math. J.},
  FJOURNAL = {Duke Mathematical Journal},
    VOLUME = {164},
      YEAR = {2015},
    NUMBER = {15},
     PAGES = {2989--3004},
     note = {\href{https://arxiv.org/abs/1303.3252}{arXiv:1303.3252}},
 }

\bib{effwi}{article}{
    AUTHOR = {Effros, E. G.},
    AUTHOR={Winkler, S.},
     TITLE = {Matrix convexity: operator analogues of the bipolar and
              {H}ahn-{B}anach theorems},
   JOURNAL = {J. Funct. Anal.},
  FJOURNAL = {Journal of Functional Analysis},
    VOLUME = {144},
      YEAR = {1997},
    NUMBER = {1},
     PAGES = {117--152},
 }

\bib{ev}{article}{
AUTHOR={Evert, E.},
AUTHOR={Helton, J.W.},
AUTHOR={Klep, I.},
AUTHOR={McCullough, S.},
TITLE={Extreme points of matrix convex sets, free spectrahedra and dilation theory},
JOURNAL={in preparation},
}

\bib{gbaum}{inproceedings}{
author = {Gr{\"u}nbaum, B.},
title = {Measures of symmetry for convex sets},
BOOKTITLE = {Proc. Sympos. Pure Math.},
volume = {VII},
pages = {233--270},
publisher = {Amer. Math. Soc.},
year = {1963},
}

\bib{hedi}{article}{
AUTHOR={Helton, J. W.},
AUTHOR={Klep, I.},
AUTHOR={McCullough, S.},
AUTHOR={Schweighofer, M.},
TITLE={Dilations, linear matrix inequalities, the matrix cube problem and beta distributions},
JOURNAL={Mem. Amer. Math. Soc.},
note = {\href{https://arxiv.org/abs/1412.1481}{arXiv:1412.1481}},
}

\bib{hecp}{article}{
    AUTHOR = {Helton, J. W.},
    AUTHOR={Klep, I.},
    AUTHOR={McCullough, S.},
     TITLE = {The matricial relaxation of a linear matrix inequality},
   JOURNAL = {Math. Program.},
  FJOURNAL = {Mathematical Programming. A Publication of the Mathematical
              Programming Society},
    VOLUME = {138},
      YEAR = {2013},
    NUMBER = {1-2, Ser. A},
     PAGES = {401--445},
     note = {\href{https://arxiv.org/abs/1003.0908}{arXiv:1003.0908}},
}
 
 \bib{freelmi}{article}{
    AUTHOR = {Helton, J. W.},
    AUTHOR={McCullough, S.},
     TITLE = {Every convex free basic semi-algebraic set has an {LMI}
              representation},
   JOURNAL = {Ann. of Math. (2)},
  FJOURNAL = {Annals of Mathematics. Second Series},
    VOLUME = {176},
      YEAR = {2012},
    NUMBER = {2},
     PAGES = {979--1013},
     note = {\href{http://arxiv.org/abs/0908.4352}{arXiv:0908.4352}},
}

 \bib{john}{incollection} {
     AUTHOR = {John, F.},
     TITLE = {Extremum problems with inequalities as subsidiary conditions},
 BOOKTITLE = {Studies and {E}ssays {P}resented to {R}. {C}ourant on his 60th
              {B}irthday, {J}anuary 8, 1948},
     PAGES = {187--204},
 PUBLISHER = {Interscience Publishers, Inc., New York, N. Y.},
      YEAR = {1948},
}

 \bib{kelldis}{article}{
 AUTHOR = {Kellner, K.},
 TITLE={Positivstellensatz Certificates for Containment of Polyhedra and Spectrahedra},
 JOURNAL={Doctoral Thesis, University of Frankfurt, Germany},
note={\href{http://publikationen.ub.uni-frankfurt.de/frontdoor/index/index/docId/37816}{http://publikationen.ub.uni-frankfurt.de/frontdoor/index/index/docId/37816}},
}
 
 \bib{ke1}{article}{
    AUTHOR = {Kellner, K.},
    AUTHOR={Theobald, T.},
     TITLE = {Sum of squares certificates for containment of
              {$\mathcal{H}$}-polytopes in {$\mathcal{V}$}-polytopes},
   JOURNAL = {SIAM J. Discrete Math.},
  FJOURNAL = {SIAM Journal on Discrete Mathematics},
    VOLUME = {30},
      YEAR = {2016},
    NUMBER = {2},
     PAGES = {763--776},
     note = {\href{https://arxiv.org/abs/1409.5008}{arXiv:1409.5008}},
}

 \bib{ke2}{article}{
    AUTHOR = {Kellner, K.},
    AUTHOR={Theobald, T.},
    AUTHOR={Trabandt, Ch.},
     TITLE = {A semidefinite hierarchy for containment of spectrahedra},
   JOURNAL = {SIAM J. Optim.},
  FJOURNAL = {SIAM Journal on Optimization},
    VOLUME = {25},
      YEAR = {2015},
    NUMBER = {2},
     PAGES = {1013--1033},
     note = {\href{http://arxiv.org/abs/1308.5076}{arXiv:1308.5076}},
 }
       
\bib{ke3}{article}{
    AUTHOR = {Kellner, K.},
    AUTHOR={Theobald, T.},
    AUTHOR={Trabandt, Ch.},
     TITLE = {Containment problems for polytopes and spectrahedra},
   JOURNAL = {SIAM J. Optim.},
  FJOURNAL = {SIAM Journal on Optimization},
    VOLUME = {23},
      YEAR = {2013},
    NUMBER = {2},
     PAGES = {1000--1020},
     note = {\href{http://arxiv.org/abs/1204.4313}{arXiv:1204.4313}},
}

\bib{klee}{article}{ 
    AUTHOR = {Klee, V.},
     TITLE = {Some characterizations of convex polyhedra},
   JOURNAL = {Acta Math.},
  FJOURNAL = {Acta Mathematica},
    VOLUME = {102},
      YEAR = {1959},
     PAGES = {79--107},
}                         
                         
\bib{las}{article} {
    AUTHOR = {Lassak, M.},
     TITLE = {Approximation of convex bodies by inscribed simplices of
              maximum volume},
   JOURNAL = {Beitr. Algebra Geom.},
  FJOURNAL = {Beitr\"age zur Algebra und Geometrie. Contributions to Algebra
              and Geometry},
    VOLUME = {52},
      YEAR = {2011},
    NUMBER = {2},
     PAGES = {389--394},
}

\bib{pau}{book}{
    AUTHOR = {Paulsen, V.},
     TITLE = {Completely bounded maps and operator algebras},
    SERIES = {Cambridge Studies in Advanced Mathematics},
    VOLUME = {78},
 PUBLISHER = {Cambridge University Press, Cambridge},
      YEAR = {2002},
     PAGES = {xii+300},
}

\bib{pauto}{article}{
    AUTHOR = {Paulsen, V.},
    AUTHOR={Todorov, I.},
    AUTHOR={Tomforde, M.},
     TITLE = {Operator system structures on ordered spaces},
   JOURNAL = {Proc. Lond. Math. Soc. (3)},
  FJOURNAL = {Proceedings of the London Mathematical Society. Third Series},
    VOLUME = {102},
      YEAR = {2011},
    NUMBER = {1},
     PAGES = {25--49},
}

\bib{rago}{article} {
    AUTHOR = {Ramana, M.},
    AUTHOR={Goldman, A. J.},
     TITLE = {Some geometric results in semidefinite programming},
   JOURNAL = {J. Global Optim.},
  FJOURNAL = {Journal of Global Optimization. An International Journal
              Dealing with Theoretical and Computational Aspects of Seeking
              Global Optima and Their Applications in Science, Management
              and Engineering},
    VOLUME = {7},
      YEAR = {1995},
    NUMBER = {1},
     PAGES = {33--50},
}

\bib{sedumi}{article}{
  author =       {Sturm, J. F.},
  title =        {Using {SeDuMi} 1.02, a {MATLAB} toolbox for optimization over symmetric cones},
  journal =      {Optimization Methods and Software},
  year =         {1999},
  volume =       {11--12},
  pages =        {625--653},
  note =         {Software available at \href{https://github.com/sqlp/sedumi}{https://github.com/sqlp/sedumi}},
}

\bib{wo}{book}{
     TITLE = {Handbook of semidefinite programming},
    SERIES = {International Series in Operations Research \& Management
              Science, 27},
    EDITOR = {H. Wolkowicz and R. Saigal and L. Vandenberghe},
      NOTE = {Theory, algorithms, and applications},
 PUBLISHER = {Kluwer Academic Publishers},
   ADDRESS = {Boston, MA},
      YEAR = {2000},
     PAGES = {xxviii+654},
}

\bib{zieg}{book} {
    AUTHOR = {Ziegler, G. M.},
     TITLE = {Lectures on polytopes},
    SERIES = {Graduate Texts in Mathematics},
    VOLUME = {152},
 PUBLISHER = {Springer-Verlag, New York},
      YEAR = {1995},
     PAGES = {x+370},

}

\end{biblist}
\end{bibdiv} 

\end{document}